%%
%% Copyright 2007, 2008, 2009 Elsevier Ltd
%%
%% This file is part of the 'Elsarticle Bundle'.
%% ---------------------------------------------
%%
%% It may be distributed under the conditions of the LaTeX Project Public
%% License, either version 1.2 of this license or (at your option) any
%% later version.  The latest version of this license is in
%%    http://www.latex-project.org/lppl.txt
%% and version 1.2 or later is part of all distributions of LaTeX
%% version 1999/12/01 or later.
%%
%% The list of all files belonging to the 'Elsarticle Bundle' is
%% given in the file `manifest.txt'.
%%

%% Template article for Elsevier's document class `elsarticle'
%% with numbered style bibliographic references
%% SP 2008/03/01
%%
%%
%%
%% $Id: elsarticle-template-num.tex 4 2009-10-24 08:22:58Z rishi $
%%
%%
\documentclass[preprint,12pt]{elsarticle}

%% Use the option review to obtain double line spacing
%% \documentclass[preprint,review,12pt]{elsarticle}

%% Use the options 1p,twocolumn; 3p; 3p,twocolumn; 5p; or 5p,twocolumn
%% for a journal layout:
%% \documentclass[final,1p,times]{elsarticle}
%% \documentclass[final,1p,times,twocolumn]{elsarticle}
%% \documentclass[final,3p,times]{elsarticle}
%% \documentclass[final,3p,times,twocolumn]{elsarticle}
%% \documentclass[final,5p,times]{elsarticle}
%% \documentclass[final,5p,times,twocolumn]{elsarticle}

%% if you use PostScript figures in your article
%% use the graphics package for simple commands
%% \usepackage{graphics}
%% or use the graphicx package for more complicated commands
%% \usepackage{graphicx}
%% or use the epsfig package if you prefer to use the old commands
%% \usepackage{epsfig}

%% The amssymb package provides various useful mathematical symbols
\usepackage{graphicx}
\usepackage{amsmath}
\usepackage{amsthm}
\usepackage{dsfont}
\usepackage{xcolor}
\usepackage{amsfonts}
\usepackage{amssymb}
\usepackage{epsfig,psfrag,pstricks}
\usepackage{subfigure}
\usepackage{tikz}
\usepackage{tkz-berge}
\usetikzlibrary{shapes,arrows}
\DeclareMathOperator{\im}{im}

\DeclareMathOperator{\spa}{span}
\DeclareMathOperator{\sat}{sat}

\newtheorem{theorem}{Theorem}
\newtheorem{lemma}[theorem]{Lemma}
\newtheorem{assumption}[theorem]{Assumption}
\newtheorem{definition}[theorem]{Definition}

\newtheorem{corollary}[theorem]{Corollary}
\newtheorem{exmp}{Example}[section]
\newtheorem{remark}[theorem]{Remark}

%% The amsthm package provides extended theorem environments
%% \usepackage{amsthm}

%% The lineno packages adds line numbers. Start line numbering with
%% \begin{linenumbers}, end it with \end{linenumbers}. Or switch it on
%% for the whole article with \linenumbers after \end{frontmatter}.
%% \usepackage{lineno}

%% natbib.sty is loaded by default. However, natbib options can be
%% provided with \biboptions{...} command. Following options are
%% valid:

%%   round  -  round parentheses are used (default)
%%   square -  square brackets are used   [option]
%%   curly  -  curly braces are used      {option}
%%   angle  -  angle brackets are used    <option>
%%   semicolon  -  multiple citations separated by semi-colon
%%   colon  - same as semicolon, an earlier confusion
%%   comma  -  separated by comma
%%   numbers-  selects numerical citations
%%   super  -  numerical citations as superscripts
%%   sort   -  sorts multiple citations according to order in ref. list
%%   sort&compress   -  like sort, but also compresses numerical citations
%%   compress - compresses without sorting
%%
%% \biboptions{comma,round}

% \biboptions{}

\journal{Systems \& Control Letters}

\begin{document}

\begin{frontmatter}

%% Title, authors and addresses

%% use the tnoteref command within \title for footnotes;
%% use the tnotetext command for the associated footnote;
%% use the fnref command within \author or \address for footnotes;
%% use the fntext command for the associated footnote;
%% use the corref command within \author for corresponding author footnotes;
%% use the cortext command for the associated footnote;
%% use the ead command for the email address,
%% and the form \ead[url] for the home page:
%%
\title{Load balancing of dynamical distribution
networks with flow constraints and unknown in/outflows}
\fntext[1]{The work of the second author is supported by the Chinese Science
Council (CSC).}
\fntext[2]{The research of the first author leading to these
results has received funding from the EU 7th Framework Programme
[FP7/2007-2013] under grant agreement no. 257462 HYCON2 Network of
Excellence.}
\author{J. Wei\fnref{1}}
\ead{J. Wei@rug.nl}
\author{A.J. van der Schaft\fnref{2}}
\ead{A.J.van.der.Schaft@rug.nl}
\address{Johann Bernoulli Institute for Mathematics and Computer
Science, University of Groningen, PO Box 407, 9700 AK, the
Netherlands.}

% \ead{A.J.van.der.Schaft@rug.nl}
% \ead[url]{home page}
% \fntext[label2]{}
% \cortext[cor1]{}
% \address{Address\fnref{label3}}
% \fntext[label3]{Johann Bernoulli Institute for Mathematics and Computer
% Science, University of Groningen, PO Box 407, 9700 AK, the
% Netherlands}

%\title{}

%% use optional labels to link authors explicitly to addresses:
%% \author[label1,label2]{<author name>}
%% \address[label1]{<address>}
%% \address[label2]{<address>}

%\author{}

% \address{1}

\begin{abstract}
%% Text of abstract
We consider a basic model of a dynamical distribution network, modeled as a directed
graph with storage variables corresponding to every vertex and flow inputs
corresponding to every edge, subject to unknown but constant
inflows and outflows. As a preparatory result it is  shown how a distributed
proportional-integral controller structure, associating
with every edge of the graph a controller state, will regulate the state
variables of the vertices, irrespective of the unknown constant inflows and
outflows, in the sense that the storage variables converge to the same value
(load balancing or consensus). This will be proved by identifying the closed-loop system as a
port-Hamiltonian system, and modifying the Hamiltonian function into a Lyapunov function, dependent
on the value of the vector of constant inflows and outflows. In the main part of
the paper the same problem will be addressed for the case that the input flow
variables are {\it constrained} to take value in an interval. We will derive
sufficient and necessary conditions for load balancing, which only depend on the
structure of the network in relation with the flow constraints.
\end{abstract}

\begin{keyword}
%% keywords here, in the form: keyword \sep keyword

%% MSC codes here, in the form: \MSC code \sep code
%% or \MSC[2008] code \sep code (2000 is the default)
PI controllers \sep flow constraints \sep directed graphs \sep port-Hamiltonian
systems \sep consensus algorithms \sep Lyapunov stability
\end{keyword}

\end{frontmatter}

%%
%% Start line numbering here if you want
%%
% \linenumbers

%% main text
\section{Introduction}
In this paper we study a basic model for the dynamics of a distribution
network. Identifying the network with a directed graph we associate with every
vertex of the graph a state variable corresponding to {\it storage}, and with
every edge a control input variable corresponding to {\it flow}, which is constrained
to take value in a given closed interval. Furthermore, some of the vertices
serve as terminals where an unknown but constant flow may enter or leave the
network in such a way that the total sum of inflows and outflows is equal to
zero. The control problem to be studied is to derive necessary and sufficient
conditions for a distributed control structure (the control input corresponding to a
given edge only depending on the difference of the state variables of the
adjacent vertices) which will ensure that the state variables associated to all
vertices will converge to the same value equal to the average of the initial condition,
irrespective of the values of the constant unknown inflows and outflows.

The structure of the paper is as follows. Some preliminaries and notations will
be given in Section 2. In Section 3 we will show how in the absence of
constraints on the flow input variables a distributed proportional-integral (PI) controller structure,
associating with every edge of the graph a controller state, will solve the
problem if and only if the graph is weakly connected. This will
be shown by identifying the closed-loop system as a port-Hamiltonian system,
with state variables associated both to the vertices and the edges of the graph,
in line with the general definition of port-Hamiltonian systems on graphs
\cite{schaftSIAM, schaftCDC08, schaftBosgrabook,
schaftNECSYS10}; see also \cite{allgower11,Mesbahi11}. The proof of asymptotic
load balancing will be given by modifying, depending on the vector
of constant inflows and outflows, the total Hamiltonian function into a Lyapunov
function. In examples the obtained PI-controller often has a clear physical
interpretation, emulating the physical action of adding energy storage and
damping to the edges.

The main contribution of the paper resides in Sections 4 and 5, where the same 
problem is addressed for the case of {\it constraints} on the flow input
variables. In Section 4 it will be shown that in the case of
{\it zero} inflow and outflow the state variables of the vertices converge to the same value if and only if
the network is strongly connected. This will be shown by constructing a $C^1$
Lyapunov function based on the total Hamiltonian and the constraint values. This
same construction will be extended in Section 5
to the case of non-zero inflows and outflows, leading to the result that in this
case asymptotic load balancing is reached if and only the graph is not only strongly
connected but also {\it balanced}. Finally, Section 6 contains the conclusions.

Some preliminary results, in particular concerning Section 3, have been already reported before in \cite{Wei2012}.

\section{Preliminaries and notations}

First we recall some standard definitions regarding directed graphs,
as can be found e.g. in \cite{Bollobas98}. A \textit{directed graph}
$\mathcal{G}$ consists of a finite set $\mathcal{V}$ of \textit{vertices}
and a finite set $\mathcal{E}$ of \textit{edges}, together
with a mapping from $\mathcal{E}$ to the set of ordered pairs of
$\mathcal{V}$, where no self-loops are allowed. Thus to any edge
$e\in\mathcal{E}$ there corresponds an ordered pair
$(v,w)\in\mathcal{V}\times\mathcal{V}$
(with $v\not=w$), representing the tail vertex $v$ and the head
vertex $w$ of this edge.

A directed graph is completely specified by its \textit{incidence
matrix} $B$, which is an $n\times m$ matrix, $n$ being the
number of vertices and $m$ being the number of edges, with $(i,j)^{\text{th}}$
element equal to $1$ if the $j^{\text{th}}$ edge is towards vertex
$i$, and equal to $-1$ if the $j^{\text{th}}$ edge is originating from
vertex $i$, and $0$ otherwise. Since we will only consider directed
graphs in this paper `graph' will throughout mean `directed graph'
in the sequel.
A directed graph is {\it strongly connected} if it is
possible to reach any vertex starting from any other vertex by traversing edges 
following their directions. A directed graph is called {\it weakly connected}
if it is possible to reach any vertex from every other vertex using the edges
{\it not} taking into account their direction. A graph is weakly connected if
and only if $\ker B^T = \spa \mathds{1}_n$. Here $\mathds{1}_n$ denotes the
$n$-dimensional vector with all elements equal to $1$. A graph that is not
weakly connected falls apart into a number of weakly connected subgraphs, called
the connected components. The number of connected components is equal to $\dim
\ker B^T$.
For each vertex, the number of incoming edges is called the {\it in-degree} of
the vertex and the number of outgoing edges its out-degree. A graph is called
{\it balanced} if and only if the in-degree and out-degree of every vertex are
equal. A graph is balanced if and only if $\mathds{1}_n \in \ker B$.

Given a graph, we define its \textit{vertex space} as the vector space of all
functions from $\mathcal{V}$ to some linear space $\mathcal{R}$. In the rest of
this paper we will take for simplicity $\mathcal{R}=\mathbb{R}$, in which case
the vertex space can be identified with $\mathbb{R}^{n}$. Similarly, we define
its \textit{edge space} as the
vector space of all functions from $\mathcal{E}$ to $\mathcal{R} = \mathbb{R}$,
which can be identified with $\mathbb{R}^{m}$. In this way, the incidence matrix
$B$ of the graph can be also regarded as the matrix representation of a linear
map from the edge space $\mathbb{R}^m$ to the vertex space $\mathbb{R}^n$.

\noindent
{\bf Notation}: For $a,b\in\mathbb{R}^m$ the notation $a \leqslant b$ will
denote elementwise inequality $a_i \leq b_i,\,i=1,\ldots,m$. For $a_i <
b_i,\,i=1,\ldots,m$ the multidimensional
saturation function
$\sat(x\,;a,b): \mathbb{R}^m\rightarrow\mathbb{R}^m$ is defined as
\begin{equation}
\sat(x\,;a,b)_i  = \left\{ \begin{array}{ll}
a_i & \textrm{if $x_i\leq a_i,$}\\
x_i & \textrm{if $a_i<x_i<b_i,$}\\
b_i & \textrm{if $x_i\geq b_i$},
\end{array}
\, i=1,\ldots,m. \right.
\end{equation}

\section{A dynamic network model with PI controller}
Let us consider the following dynamical system defined on the graph
$\mathcal{G}$
\begin{equation}\label{system}
\begin{array}{rcl}
\dot{x} & = & Bu, \quad x \in \mathbb{R}^n, u \in \mathbb{R}^m \\[2mm]
y & = & B^T \frac{\partial H}{\partial x}(x), \quad y \in \mathbb{R}^m,
\end{array}
\end{equation}
where $H: \mathbb{R}^n \to \mathbb{R}$ is any differentiable function, and $\frac{\partial H}{\partial
x}(x)$ denotes the column vector of partial derivatives of $H$. Here the
$i^{\text{th}}$
element $x_i$ of the state vector $x$ is the state variable
associated to the $i^{\text{th}}$ vertex, while $u_j$ is a flow input variable
associated to the
$j^{\text{th}}$ edge of the graph. System (\ref{system}) defines a
port-Hamiltonian system (\cite{vanderschaftmaschkearchive,
vanderschaftbook}), satisfying the energy-balance
\begin{equation}
\frac{d}{dt}H = u^Ty.
\end{equation}
Note that geometrically its state space is the vertex space, its input space is
the edge space, while its output space is the dual of the
edge space.

\begin{exmp}[Hydraulic network]
Consider a hydraulic network, modeled as a directed graph with vertices (nodes)
corresponding to reservoirs, and edges (branches) corresponding to pipes.
Let $x_i$ be the stored water at vertex $i$, and $u_j$ the flow through edge
$j$. Then the mass-balance of the network is summarized in
\begin{equation}
\dot{x} = Bu,
\end{equation}
where $B$ is the incidence matrix of the graph. Let furthermore $H(x)$ denote
the stored energy in the reservoirs (e.g., gravitational energy). Then $P_i :=
\frac{\partial H}{\partial x_i}(x), i=1, \ldots, n,$ are the {\it pressures} at
the vertices, and the output vector $y= B^T \frac{\partial H}{\partial x}(x)$ is
the vector whose $j^{\text{th}}$ element is the pressure {\it difference}
$P_i - P_k$
across the $j^{\text{th}}$ edge linking vertex $k$ to vertex $i$.
\end{exmp}

As a next step we will extend the dynamical system (\ref{system}) with a vector
$d$ of {\it inflows and outflows}
\begin{equation}\label{system1}
\begin{array}{rcl}
\dot{x} & = & Bu + Ed, \quad x \in \mathbb{R}^n, u \in \mathbb{R}^m, \quad d \in
\mathbb{R}^k \\[2mm]
y & = & B^T \frac{\partial H}{\partial x}(x), \quad y \in \mathbb{R}^m,
\end{array}
\end{equation}
with $E$ an $n \times k$ matrix whose columns consist of exactly one entry equal
to $1$ (inflow) or $-1$ (outflow), while the rest of the elements is zero. Thus
$E$ specifies the $k$ terminal vertices where flows can enter or leave the
network.

In this paper we will regard $d$ as a vector of constant {\it disturbances},
and we want to investigate control schemes which
ensure asymptotic load balancing of the state vector $x$ irrespective of the
(unknown) disturbance $d$. The
simplest control possibility is to apply a proportional output feedback
\begin{equation}\label{Pcontroller}
u =-Ry = -RB^T\frac{\partial H}{\partial x}(x),
\end{equation}
where $R$ is a diagonal matrix with strictly positive diagonal elements
$r_1,\ldots,r_m$. Note that this defines a {\it decentralized} control scheme if
$H$ is of the form $H(x)= H_1(x_1) + \ldots + H_n(x_n)$, in which case the
$i^{\text{th}}$ input is given as $r_i$ times the difference of the component of
$\frac{\partial H}{\partial x}(x)$ corresponding to the head vertex of the
$i^{\text{th}}$ edge and the component of $\frac{\partial H}{\partial x}(x)$
corresponding to its tail vertex.
This control scheme leads to the closed-loop system
\begin{equation}\label{closedloop1}
\dot{x} = -BRB^T \frac{\partial H}{\partial x}(x) + Ed.
\end{equation}
In case of {\it zero} in/outflows $d=0$ this implies the energy-balance
\begin{equation}
\frac{d}{dt}H = - \frac{\partial^T H}{\partial x}(x)BRB^T \frac{\partial
H}{\partial x}(x) \leq 0.
\end{equation}
Hence if $H$ is radially unbounded it follows that the system trajectories of the closed-loop system
(\ref{closedloop1}) will
converge to the set
\begin{equation}
\mathcal{E} := \{x \mid B^T \frac{\partial H}{\partial x}(x)=0 \}.
\end{equation}
and thus to the load balancing set
\[
\mathcal{E} = \{x \mid \frac{\partial H}{\partial x}(x) = \alpha \mathds{1},
\alpha \in \mathbb{R} \}.
\]
if and only if $\ker B^T = \spa\{\mathds{1}\}$, or equivalently
\cite{Bollobas98}, if and only if the graph is {\it weakly connected}.

In particular, for the standard Hamiltonian $H(x) = \frac{1}{2} \| x \|^2$ this
means that the state variables
$x_i(t), i=1, \ldots,n,$ converge to a common value $\alpha$ as $t \to \infty$. Since $\frac{d}{dt} \mathds{1}^Tx(t)=0$ it follows that this common value is given as $\alpha = \frac{1}{n} \sum_{i=1}^n x_i(0)$.

\medskip
For $d \neq 0$ proportional control $u=
-Ry$ will not be sufficient to reach load balancing, since the disturbance $d$ can only be attenuated at the
expense of increasing the gains in the matrix $R$. Hence we
consider {\it proportional-integral} (PI) control given by the dynamic output
feedback
\begin{equation}\label{PI}
\begin{array}{rcl}
\dot{x}_c & = & y ,\\[2mm]
u & = &-Ry - \frac{\partial H_c}{\partial x_c}(x_c),
\end{array}
\end{equation}
where $H_c(x_c)$ denotes the storage function (energy) of the controller. Note
that this PI controller is of the same decentralized nature as the static output
feedback $u=-Ry$.

The $j^{\text{th}}$ element of the controller state $x_c$ can be regarded as an
additional state variable corresponding to the $j^{\text{th}}$ edge. Thus $x_c \in
\mathbb{R}^m$, the edge space of the network. The closed-loop system resulting
from the PI control (\ref{PI}) is given as
\begin{equation}\label{closedloop}
\begin{bmatrix} \dot{x} \\[2mm] \dot{x}_c \end{bmatrix} =
\begin{bmatrix} -BRB^T & -B \\[2mm] B^T & 0 \end{bmatrix}
\begin{bmatrix} \frac{\partial H}{\partial x}(x) \\[2mm] \frac{\partial
H_c}{\partial x_c}(x_c) \end{bmatrix} +
\begin{bmatrix} E \\[2mm] 0 \end{bmatrix} d,
\end{equation}
This is again a port-Hamiltonian system\footnote{See
\cite{schaftSIAM, schaftCDC08, schaftBosgrabook, schaftNECSYS10}
for a general definition of port-Hamiltonian systems on graphs.},
with total
Hamiltonian $H_{\mathrm{tot}}(x,x_c)\\ := H(x) + H_c(x_c)$, and satisfying
the
energy-balance
\begin{equation}\label{Lyapunov}
\frac{d}{dt}  H_{\mathrm{tot}}= - \frac{\partial^T H}{\partial x}(x)BRB^T
\frac{\partial H}{\partial x}(x) + \frac{\partial^T H}{\partial x}(x)Ed
\end{equation}
Consider now a constant disturbance $\bar{d}$ for which there exists a {\it matching}
controller state $\bar{x}_c$, i.e.,
\begin{equation}\label{matching}
E \bar{d} = B\frac{\partial H_c}{\partial x_c}(\bar{x}_c).
\end{equation}
This allows us to modify the total Hamiltonian $H_{\mathrm{tot}}(x,x_c)$ into\footnote{This function was introduced for passive systems with constant inputs in \cite{jaya1}.}
\begin{equation}
V_{\bar{d}}(x,x_c) := H(x) + H_c(x_c) - \frac{\partial^T H_c}{\partial
x_c}(\bar{x}_c)(x_c - \bar{x}_c) - H_c(\bar{x}_c),
\end{equation}
which will serve as a candidate Lyapunov function; leading to the
following theorem.
\begin{theorem}
Consider the system (\ref{system1}) on the graph $\mathcal{G}$ in closed loop with the PI-controller (\ref{PI}).
Let the constant disturbance $\bar{d}$ be such that there exists a $\bar{x}_c$
satisfying the matching equation (\ref{matching}). Assume that
$V_{\bar{d}}(x,x_c)$ is radially unbounded. Then the trajectories of the
closed-loop system (\ref{closedloop}) will converge to an element of the load balancing set
\begin{equation}
\mathcal{E}_{\mathrm{tot}} = \{ (x,x_c) \mid \frac{\partial H}{\partial x}(x) =
\alpha \mathds{1}, \, \alpha \in \mathbb{R}, \, B\frac{\partial H_c}{\partial
x_c}(x_c) = E\bar{d}\, \}.
\end{equation}
if and only if $\mathcal{G}$ is weakly connected.
\end{theorem}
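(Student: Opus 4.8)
The plan is to use $V_{\bar d}(x,x_c)$ as a Lyapunov function and apply LaSalle's invariance principle, and then to characterize the largest invariant set contained in the set where $\dot V_{\bar d}=0$, showing that this invariant set coincides with $\mathcal{E}_{\mathrm{tot}}$ precisely when $\ker B^T = \spa\{\mathds{1}_n\}$, i.e. when $\mathcal{G}$ is weakly connected.

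First I would compute $\dot V_{\bar d}$ along the closed-loop dynamics \eqref{closedloop}. Using the matching equation \eqref{matching} to rewrite $Ed = E\bar d = B\frac{\partial H_c}{\partial x_c}(\bar x_c)$, the cross terms coming from the $-B$ and $B^T$ off-diagonal blocks and from the $Ed$ term should telescope exactly as in the energy-balance \eqref{Lyapunov}, leaving
\[
\frac{d}{dt} V_{\bar d}(x,x_c) = - \frac{\partial^T H}{\partial x}(x)\, B R B^T\, \frac{\partial H}{\partial x}(x) \;\leq\; 0,
\]
since $R>0$. This is the key cancellation: the shift by $\frac{\partial^T H_c}{\partial x_c}(\bar x_c)(x_c-\bar x_c)$ in $V_{\bar d}$ is precisely what is needed to absorb the disturbance $Ed$, using that $B^T\frac{\partial H}{\partial x}(x)=y=\dot x_c$. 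With $V_{\bar d}$ assumed radially unbounded, its sublevel sets are compact and invariant, so LaSalle applies: trajectories converge to the largest invariant subset of $\{(x,x_c) \mid B R B^T \frac{\partial H}{\partial x}(x) = 0\}$. Since $R>0$, this set equals $\{(x,x_c)\mid B^T\frac{\partial H}{\partial x}(x)=0\}$, equivalently $\dot x_c = 0$.

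Next I would determine the largest invariant set inside $\{B^T\frac{\partial H}{\partial x}(x)=0\}$. On this set $\dot x_c = y = 0$, so $x_c$ is constant, say $x_c=\tilde x_c$; and $\dot x = -BRB^T\frac{\partial H}{\partial x}(x) + Ed = Ed - B\frac{\partial H_c}{\partial x_c}(\tilde x_c)$, a constant vector. For the trajectory to remain in the (bounded) sublevel set this constant must be zero, giving $B\frac{\partial H_c}{\partial x_c}(\tilde x_c) = E\bar d$, which is the second defining condition of $\mathcal{E}_{\mathrm{tot}}$. It remains to convert $B^T\frac{\partial H}{\partial x}(x)=0$ into $\frac{\partial H}{\partial x}(x)=\alpha\mathds{1}_n$: this holds for all such $x$ if and only if $\ker B^T \subseteq \spa\{\mathds{1}_n\}$, and since $\mathds{1}_n\in\ker B^T$ always, this is equivalent to $\ker B^T=\spa\{\mathds{1}_n\}$, i.e. to weak connectedness by the characterization recalled in Section 2. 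For the converse direction, if $\mathcal{G}$ is not weakly connected, pick $H$ (e.g. the standard quadratic Hamiltonian) and an equilibrium with $\frac{\partial H}{\partial x}(x)\in\ker B^T\setminus\spa\{\mathds{1}_n\}$ and a matching $x_c$; this is an equilibrium of the closed loop lying outside $\mathcal{E}_{\mathrm{tot}}$, so convergence to $\mathcal{E}_{\mathrm{tot}}$ fails.

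The main obstacle I anticipate is the "converge to an element of $\mathcal{E}_{\mathrm{tot}}$" phrasing: LaSalle guarantees convergence to the set $\mathcal{E}_{\mathrm{tot}}$, but convergence of a trajectory to a single equilibrium point requires an additional argument — typically that each point of $\mathcal{E}_{\mathrm{tot}}$ is Lyapunov stable (so that, combined with the $\omega$-limit set being contained in $\mathcal{E}_{\mathrm{tot}}$ and connected, the limit set is a single point), or an argument using that $\mathds{1}_n^T x + \mathds{1}_m^T x_c$-type quantities together with $V_{\bar d}$ pin down the limit. I would handle this by noting that on $\mathcal{E}_{\mathrm{tot}}$ the function $V_{\bar d}$ is constant and the set is, under the radial unboundedness and the structure of $H,H_c$, a manifold of equilibria on which $V_{\bar d}$ attains its minimum, so stability of each equilibrium follows and the $\omega$-limit set reduces to a point.
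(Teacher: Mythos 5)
Your proposal follows essentially the same route as the paper: the shifted Hamiltonian $V_{\bar d}$ as Lyapunov function, the cancellation of the disturbance term via the matching condition \eqref{matching}, LaSalle's invariance principle, and the identification of weak connectedness with $\ker B^T = \spa\{\mathds{1}_n\}$ to pass from $B^T\frac{\partial H}{\partial x}(x)=0$ to $\frac{\partial H}{\partial x}(x)=\alpha\mathds{1}_n$. You are in fact slightly more careful than the paper on two points --- using boundedness of the sublevel set to force $\dot x=0$ (hence $B\frac{\partial H_c}{\partial x_c}(x_c)=E\bar d$) on the invariant set, and flagging that convergence to a single element of $\mathcal{E}_{\mathrm{tot}}$ requires an argument beyond LaSalle's set convergence --- both of which the paper glosses over.
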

\begin{proof} Suppose $\mathcal{G}$ is weakly connected. By (\ref{Lyapunov}) for $d=\bar{d}$ we obtain, making use of
(\ref{matching}),
\begin{equation}\label{Lyapunov1}
\begin{aligned}
\frac{d}{dt} V_{\bar{d}} =& - \frac{\partial^T H}{\partial x}(x)BRB^T
\frac{\partial H}{\partial x}(x) + \frac{\partial^T H}{\partial x}(x)E\bar{d}-
\\[3mm]
& \frac{\partial^T H_c}{\partial x_c}(\bar{x}_c)B^T\frac{\partial
H}{\partial x}(x) \\[3mm]
 =& - \frac{\partial^T H}{\partial x}(x)BRB^T \frac{\partial H}{\partial x}(x)
\leq 0.
\end{aligned}
\end{equation}
Hence by LaSalle's invariance principle the system trajectories converge to the
largest invariant set contained in
\[
\{(x,x_c) \mid B^T \frac{\partial H}{\partial x}(x) =0 \}.
\]
Substitution of $B^T \frac{\partial H}{\partial x}(x) =0$ in the closed-loop
system equations (\ref{closedloop}) yields $x_c$ constant and $-B\frac{\partial
H_c}{\partial x_c}(x_c) + E\bar{d}=0$. Since the graph is weakly connected $B^T \frac{\partial H}{\partial x}(x) =0$ implies $\frac{\partial H}{\partial x}(x) = \alpha \mathds{1}$. If the graph is not weakly connected then the above analysis will hold on every connected component, but the common value $\alpha$ will be different for different components.
\end{proof}
\begin{corollary}
If $\ker B = 0$, which is equivalent (\cite{Bollobas98}) to the graph having no
{\it cycles}, then for every $\bar{d}$ there exists a unique $\bar{x}_c$
satisfying (\ref{matching}), and convergence is towards the set
$\mathcal{E}_{\mathrm{tot}} = \{ (x, \bar{x}_c)
\mid \frac{\partial H}{\partial x}(x) = \alpha \mathds{1}, \alpha \in
\mathbb{R}, \, x_c=\bar{x}_c \}$.
\end{corollary}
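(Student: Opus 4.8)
The plan is to obtain the Corollary as a direct consequence of Theorem~1, the only additional ingredients being two elementary linear-algebra facts: that $\ker B=\{0\}$ makes the matching controller state unique, and that the zero-net-flow constraint on $\bar d$ together with weak connectivity makes it exist. I keep the standing hypotheses of Theorem~1 --- in particular weak connectivity of $\mathcal G$ and radial unboundedness of $V_{\bar d}$ --- so that $\ker B=\{0\}$ (equivalently, by \cite{Bollobas98}, $\mathcal G$ has no cycles) means $\mathcal G$ is a tree, with $m=n-1$.

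First I would settle uniqueness. $\ker B=\{0\}$ says precisely that the linear map $B$ is injective, hence admits a left inverse, e.g. $L:=(B^TB)^{-1}B^T$ with $LB=I_m$. Consequently any $x_c$ satisfying the matching equation (\ref{matching}), $E\bar d=B\frac{\partial H_c}{\partial x_c}(x_c)$, must obey $\frac{\partial H_c}{\partial x_c}(x_c)=LE\bar d$, which determines $\frac{\partial H_c}{\partial x_c}(x_c)$ uniquely. For the standard quadratic controller energy $H_c(x_c)=\frac12\|x_c\|^2$ --- more generally, for any strictly convex $H_c$, whose gradient map is injective --- this in turn determines $x_c$ itself, so there is at most one matching state, which I call $\bar x_c$ (in the quadratic case $\bar x_c=LE\bar d$).

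Next I would verify existence, i.e.\ check the hypothesis of Theorem~1 for every admissible $\bar d$. Since the inflows balance the outflows, $\mathds 1_n^TE\bar d=0$, that is $E\bar d\in(\spa\mathds 1_n)^\perp$; and weak connectivity gives $\ker B^T=\spa\mathds 1_n$, so $E\bar d\in(\ker B^T)^\perp=\im B$. Hence there is $\xi\in\mathbb R^m$ with $B\xi=E\bar d$, and any $\bar x_c$ with $\frac{\partial H_c}{\partial x_c}(\bar x_c)=\xi$ (equivalently $\bar x_c=LE\bar d$ in the quadratic case) solves (\ref{matching}). Thus Theorem~1 applies for the given $\bar d$ and yields convergence of the closed-loop trajectories to
\[
\mathcal E_{\mathrm{tot}}=\{(x,x_c)\mid \frac{\partial H}{\partial x}(x)=\alpha\mathds 1,\ \alpha\in\mathbb R,\ B\frac{\partial H_c}{\partial x_c}(x_c)=E\bar d\}.
\]
On $\mathcal E_{\mathrm{tot}}$ one has $B\frac{\partial H_c}{\partial x_c}(x_c)=E\bar d=B\frac{\partial H_c}{\partial x_c}(\bar x_c)$, so injectivity of $B$ --- and then of the gradient of $H_c$ --- forces $x_c=\bar x_c$; hence $\mathcal E_{\mathrm{tot}}$ collapses to $\{(x,\bar x_c)\mid \frac{\partial H}{\partial x}(x)=\alpha\mathds 1,\ \alpha\in\mathbb R,\ x_c=\bar x_c\}$, which is the claim.

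I do not expect a genuine obstacle; this is a true corollary. The two points that need a line of care are (i) that the hypothesis of Theorem~1 really holds for all admissible $\bar d$ --- this is the existence step, where both the zero-net-flow assumption and $\ker B^T=\spa\mathds 1_n$ are used, and where weak connectivity is essential (on a disconnected forest existence of a matching state can fail) --- and (ii) the mild regularity of $H_c$ (strict convexity, automatic for the quadratic choice) needed to upgrade ``$\frac{\partial H_c}{\partial x_c}(x_c)$ determined'' to ``$x_c$ determined''. If one is content to describe $\mathcal E_{\mathrm{tot}}$ through $\frac{\partial H_c}{\partial x_c}(x_c)$ rather than $x_c$, even (ii) can be dropped.
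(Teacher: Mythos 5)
Your proof is correct and follows the route the paper (implicitly) intends --- the corollary is stated there without proof as an immediate consequence of Theorem~1: injectivity of $B$ pins down $\frac{\partial H_c}{\partial x_c}(\bar x_c)$, hence $\bar x_c$ itself for strictly convex $H_c$, and forces the set $\mathcal{E}_{\mathrm{tot}}$ of Theorem~1 to collapse onto $x_c=\bar x_c$, while existence uses $\mathds{1}^T E\bar d=0$ together with $\ker B^T=\spa\{\mathds{1}\}$. Your explicit flagging of where weak connectivity and injectivity of $\nabla H_c$ are needed fills in details the paper leaves tacit, and both observations are accurate.
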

\begin{corollary}
In case of the standard quadratic Hamiltonians $H(x) = \frac{1}{2} \| x \|^2$,
$H_c(x_c)=\frac{1}{2} \| x_c \|^2$ there exists for every $\bar{d}$
a controller state $\bar{x}_c$ such that (\ref{matching}) holds if and only if
\begin{equation}\label{matching1}
\im E \subset \im B.
\end{equation}
Furthermore, in this case $V_{\bar{d}}$ equals the radially unbounded function $\frac{1}{2}
\| x \|^2 + \frac{1}{2} \| x_c - \bar{x}_c \|^2$, while convergence will be
towards the load balancing set $\mathcal{E}_{\mathrm{tot}} = \{ (x,x_c) \mid x = \alpha
\mathds{1}, \alpha \in \mathbb{R},\, Bx_c = E\bar{d}\}$.
\end{corollary}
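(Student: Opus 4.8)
The plan is to reduce everything to elementary linear algebra, since for the standard quadratic Hamiltonians the gradient maps become identities. First I would note that $\frac{\partial H_c}{\partial x_c}(x_c) = x_c$, so the matching equation (\ref{matching}) becomes simply $E\bar{d} = B\bar{x}_c$. For a fixed $\bar{d}$ this is solvable in $\bar{x}_c$ exactly when $E\bar{d} \in \im B$; requiring solvability for \emph{every} $\bar{d} \in \mathbb{R}^k$ is therefore equivalent to $\im E = \{E\bar{d} \mid \bar{d} \in \mathbb{R}^k\} \subseteq \im B$, which is precisely (\ref{matching1}). This settles the ``if and only if'' part.

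Next I would substitute the quadratic Hamiltonians into the definition of $V_{\bar{d}}$. With $H(x) = \tfrac12\|x\|^2$, $H_c(x_c) = \tfrac12\|x_c\|^2$ and $\frac{\partial H_c}{\partial x_c}(\bar{x}_c) = \bar{x}_c$, expanding the inner product and completing the square gives
\[
V_{\bar{d}}(x,x_c) = \tfrac12\|x\|^2 + \tfrac12\|x_c\|^2 - \bar{x}_c^{T}(x_c - \bar{x}_c) - \tfrac12\|\bar{x}_c\|^2 = \tfrac12\|x\|^2 + \tfrac12\|x_c - \bar{x}_c\|^2 .
\]
Since $\|x\|\to\infty$ or $\|x_c\|\to\infty$ forces $V_{\bar{d}}\to\infty$, this function is radially unbounded on $\mathbb{R}^n\times\mathbb{R}^m$, so the standing hypothesis of the Theorem is automatically fulfilled.

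Finally I would invoke the Theorem: provided $\mathcal{G}$ is weakly connected, the closed-loop trajectories converge to the set $\mathcal{E}_{\mathrm{tot}}$ described there. Substituting $\frac{\partial H}{\partial x}(x) = x$ and $\frac{\partial H_c}{\partial x_c}(x_c) = x_c$ into that description turns the condition $\frac{\partial H}{\partial x}(x) = \alpha\mathds{1}$ into $x = \alpha\mathds{1}$ and the condition $B\frac{\partial H_c}{\partial x_c}(x_c) = E\bar{d}$ into $Bx_c = E\bar{d}$, yielding exactly the stated load balancing set $\{(x,x_c)\mid x=\alpha\mathds{1},\,\alpha\in\mathbb{R},\,Bx_c = E\bar{d}\}$.

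There is no genuinely difficult step here; the only things to watch are the order of quantifiers in the ``for every $\bar{d}$ there exists a matching $\bar{x}_c$'' equivalence, and the fact that the convergence claim is inherited from the Theorem and hence tacitly presupposes weak connectivity of $\mathcal{G}$ (without it, the analysis applies per connected component and the common value $\alpha$ may differ between components, as in the Theorem's proof).
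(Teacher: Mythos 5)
Your proof is correct and follows exactly the route the paper intends for this corollary: reduce the matching equation to $E\bar{d}=B\bar{x}_c$ via the identity gradient maps, complete the square to obtain $V_{\bar{d}}=\tfrac12\|x\|^2+\tfrac12\|x_c-\bar{x}_c\|^2$, and specialize the Theorem's load balancing set. Your closing remark about the quantifier order and the tacit weak-connectivity hypothesis is also consistent with the Theorem's statement and proof.
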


A necessary (and in case the graph is weakly connected necessary {\it and}
sufficient) condition for the inclusion $\im E \subset \im B $ is that $\mathds{1}^TE =
0$. In its turn $\mathds{1}^TE =
0$ is equivalent to the fact that for every $\bar{d}$ the total inflow into the network equals to the
total outflow). The condition $\mathds{1}^TE = 0$ also implies
\begin{equation}
\mathds{1}^T\dot{x} = -\mathds{1}^TBRB^T\frac{\partial H}{\partial x}(x) +
\mathds{1}^TE\bar{d}=0,
\end{equation}
implying (as in the case $d=0$) that $\mathds{1}^Tx$ is a {\it conserved quantity} for the closed-loop
system (\ref{closedloop}). In particular it
follows that the limit value $\lim_{t \to \infty}x(t) \in \spa\{ \mathds{1}\}$
is
determined by the initial condition $x(0)$.

\begin{exmp}[Hydraulic network continued]
The proportional part $u=-Ry$ of the controller corresponds to adding {\it damping} to the dynamics (proportional to
the pressure differences along the edges). The integral part of the controller
has the interpretation of adding {\it compressibility} to the hydraulic network
dynamics. Using this emulated compressibility, the PI-controller is able to
regulate the hydraulic network to a load balancing situation where all pressures
$P_i$
are equal, irrespective of the constant inflow and outflow $\bar{d}$ satisfying
the matching condition (\ref{matching}). Note that for the Hamiltonian $H(x)=
\frac{1}{2}\|x\|^2$ the pressures $P_i$ are equal to each other if and only if the water
levels $x_i$ are equal.
\end{exmp}

\section{Constrained flows: the case without in/out flows}
In many cases of interest, the elements of the vector of flow inputs $u \in \mathbb{R}^m$
corresponding to the edges of the graph will be {\it constrained}, that is
\begin{equation}
 u \in\mathcal{U}:=\{u\in\mathbb{R}^m\mid u^-\leqslant u\leqslant u^+\}
\end{equation}
for certain vectors $u^-$ and $ u^+$ satisfying $u^-_i\leqslant0 \leqslant
u^+_i, i=1,\ldots,m$ (throughout $\leqslant $ denotes element-wise inequality).
This leads to the following constrained version\footnote{See also
\cite{Blanchini00} for a related problem setting where a constrained
version of the proportional controller (\ref{Pcontroller}) is considered.} of the PI
controller (\ref{PI}) given in the previous section
\begin{equation}\label{PIconstrained}
\begin{array}{rcl}
\dot{x}_c & = & y ,\\[2mm]
u & = &\sat\big(-Ry - \frac{\partial H_c}{\partial x_c}(x_c)\,;u^-,u^+\big)
\end{array}
\end{equation}
Throughout this paper we make the following assumption on the flow constraints.
\begin{assumption}
\begin{equation}
u^-_i \leqslant 0, \quad u^+_i \geqslant 0, \quad u^-_i <  u^+_i, \, i=1, \ldots,m
\end{equation}
\end{assumption}
It is important to note that we may change the {\it orientation} of some of the
edges of the graph at will; replacing the corresponding columns $b_i$ of the
incidence matrix $B$ by $-b_i$. Noting the identity
$\sat(-x\,;u_i^-,u_i^+)=-\sat(x\,;-u_i^+,-u_i^-)$ this implies that we may
assume without loss of generality that the orientation of the graph is chosen
such that
\begin{equation}\label{assumption}
u^-_i \leqslant 0 < u^+_i, \quad i=1,\ldots,m
\end{equation}
{\bf This will be assumed throughout the rest of the paper}. In general, we
will say that any orientation of the graph is {\it compatible} with the flow
constraints if (\ref{assumption}) holds. If the $j$-th edge is such that $u^-_j = 0$ then we will call this edge an {\it uni-directional} edge, while if $u^-_j < 0$ then the edge is called a {\it bi-directional} edge.

In this section we will first analyze the closed-loop system for the constrained
PI-controller under the simplifying assumption of {\it zero inflow and outflow}
($d=0$). In the next section we will deal with the general case.
Furthermore, for simplicity of exposition we consider throughout the rest of this paper the standard
Hamiltonian $H_c(x_c) = \frac{1}{2} \| x_c \|^2$ for the constrained PI
controller and the identity gain matrix $R=I$, while we also throughout assume
that the Hessian matrix of
Hamiltonian $H(x)$ is positive definite for any $x$. Thus
we consider the closed-loop system
\begin{equation}\label{closedloop-sat}
\begin{array}{rcl}
\dot{x} & = & B \sat\big(-B^T\frac{\partial H}{\partial
x}(x)-x_c\,;u^-,u^+\big), \\[2mm]
\dot{x}_c & = & B^T\frac{\partial H}{\partial x}(x).
\end{array}
\end{equation}
In order to state the main theorem of this section we need one more definition 
concerning strong connectedness with respect to flow constraints.
\begin{definition}
Consider the directed graph $\mathcal{G}$ together with the constraint values 
$u^-,u^+$ satisfying (\ref{assumption}). Then we will call the graph strongly
connected {\it with respect to the flow constraints} $u^- \leqslant u \leqslant
u^+$ if the following holds: for every two vertices $v_1, v_2$ there exists an
orientation of the graph compatible with the flow constraints\footnote{Note that
for different pairs of vertices we may need different orientations compatible
with the flow constraints. Thus the definition of strong connectedness
with respect to the flow constraints is {\it weaker} than the existence of an
orientation of the graph compatible with the flow constraints in which the graph
is strongly connected.} and a directed path (directed with respect to this
orientation) from $v_1$ to $v_2$.
\end{definition}

\begin{theorem}\label{th:zerodisturbance}
Consider the closed-loop system $(\ref{closedloop-sat})$ on a graph
$\mathcal{G}$ with flow constraints $u^- \leqslant u \leqslant u^+$ satisfying
(\ref{assumption}). Then its solutions converge to the load balancing set
\begin{equation}\label{set-zerodist}
 \mathcal{E}_{\mathrm{tot}} = \{ (x,x_c) \mid \frac{\partial H}{\partial x}(x) =
\alpha \mathds{1}_n, \, B\sat(-x_c\,;u^-,u^+) = 0 \}
\end{equation}
if and only if the graph is strongly connected with respect to the flow
constraints.
\end{theorem}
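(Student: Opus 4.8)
The plan is to obtain sufficiency from LaSalle's invariance principle applied to a $C^1$ modification of the total Hamiltonian $H_{\mathrm{tot}}$, and necessity from a conservation law that lets the storage drain across a suitable cut.

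\emph{Sufficiency.} Since the controller output is now saturated, I would replace $H_{\mathrm{tot}}(x,x_c)=H(x)+\tfrac12\|x_c\|^2$ by
\[
V(x,x_c):=H(x)+\sum_{j=1}^m\Psi_j(x_{c,j}),\qquad \Psi_j(s):=-\int_0^s\sat(-\tau\,;u_j^-,u_j^+)\,d\tau ,
\]
observing that each $\Psi_j$ is $C^1$, convex, nonnegative, vanishes at $s=0$, and reduces to $\tfrac12 s^2$ when the bounds are removed; thus $V$ is a $C^1$ function built from the Hamiltonian and the constraint values, in the spirit of Section~3. Using $\dot{x}=Bu$, $\dot{x}_c=y=B^T\frac{\partial H}{\partial x}(x)$ and $u=\sat(-y-x_c\,;u^-,u^+)$, a short computation gives
\[
\dot{V}=\sum_{j=1}^m\Big(\sat(-y_j-x_{c,j}\,;u_j^-,u_j^+)-\sat(-x_{c,j}\,;u_j^-,u_j^+)\Big)y_j\le 0,
\]
each term being $\le 0$ because $\sat(\cdot\,;u_j^-,u_j^+)$ is nondecreasing. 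The $x$-component of any solution then stays in a sublevel set of $H$ (as $\sum_j\Psi_j\ge 0$); after treating boundedness/properness of the $x_c$-component — directly, or by working with the closed, equilibrium-only invariant set $\mathcal{E}_{\mathrm{tot}}$ — LaSalle's invariance principle gives convergence to the largest invariant set $M\subseteq\{\dot{V}=0\}$.

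\emph{Identifying $M$.} The main step is to show $M=\mathcal{E}_{\mathrm{tot}}$ precisely when $\mathcal{G}$ is strongly connected with respect to the flow constraints. On $M$, $\dot{V}\equiv 0$ forces, for every edge $j$, that either $y_j\equiv 0$ or edge $j$ is saturated at $u_j^+$ or $u_j^-$ in a way tied to the signs of $y_j$ and $x_{c,j}$. On any time interval along which the set of saturated edges is fixed, $u$ is constant, so $\dot{x}=Bu$ is constant and boundedness of $M$ forces $Bu=0$; then $x$, hence $y=B^T\frac{\partial H}{\partial x}(x)$, is constant, and $\dot{x}_c=y$ constant together with boundedness forces $y=0$; since strong connectedness with respect to the flow constraints implies weak connectedness, $y=0$ yields $\frac{\partial H}{\partial x}(x)=\alpha\mathds{1}_n$, while $Bu=B\sat(-x_c\,;u^-,u^+)=0$, i.e.\ $(x,x_c)\in\mathcal{E}_{\mathrm{tot}}$. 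The delicate point — and the main obstacle — is to exclude trajectories of $\{\dot{V}=0\}$ on which the active-constraint set keeps changing, i.e.\ some edge stays saturated while carrying a persistently nonzero flow. Here I would run a maximum-principle / flow-following argument: if $\frac{\partial H}{\partial x}(x)$ were nonconstant along such a trajectory, take $v_1$ where it is maximal and $v_2$ where it is strictly smaller, invoke strong connectedness with respect to the flow constraints to pick an orientation compatible with $u^-\le u\le u^+$ together with a directed path $v_1\to v_2$, and track the flow along that path, re-using the edge-by-edge orientation freedom, to contradict either $\dot{V}\equiv 0$ or boundedness of $M$. This path tracking, together with the bookkeeping of saturated versus unsaturated edges, is where the precise notion of strong connectedness with respect to the flow constraints — rather than plain strong or weak connectedness — is needed, and where I expect the real work.

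\emph{Necessity.} Conversely, suppose $\mathcal{G}$ is not strongly connected with respect to the flow constraints, and choose $v_1,v_2$ admitting no directed path $v_1\to v_2$ in any compatible orientation. Let $W$ be the set of vertices reachable from $v_1$ by traversing unidirectional edges forwards and bidirectional edges in either direction; then $v_2\notin W$ and, by maximality of $W$, every edge between $W$ and $W^c$ (if any) is a unidirectional edge directed from $W^c$ into $W$. Since $u_j^-=0$ on a unidirectional edge, the flow on each such cut edge is $\ge 0$, so $\frac{d}{dt}\sum_{i\in W^c}x_i=-\sum_{j\ \mathrm{cut}}u_j\le 0$: the total storage in $W^c$ can only decrease. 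As $\mathds{1}_n^Tx$ is conserved, the load-balanced configuration determined by $\mathds{1}_n^Tx(0)$ is a fixed vector; choosing $x(0)$ that concentrates the storage on $W$, so that $\sum_{i\in W^c}x_i(0)$ lies strictly below the value it takes at that configuration, the nonincreasing quantity $\sum_{i\in W^c}x_i(t)$ can never attain it, and the solution does not converge to $\mathcal{E}_{\mathrm{tot}}$. This gives the converse.
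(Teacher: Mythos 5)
Your sufficiency argument has a genuine gap, and it is exactly the step you flag as ``where I expect the real work.'' The paper does not use a Lyapunov function of the form $H(x)+\sum_j\Psi_j(x_{c,j})$; it uses
$V(x,x_c)=\mathds{1}_m^{T} S\bigl(-B^{T}\tfrac{\partial H}{\partial x}(x)-x_c\,;u^-,u^+\bigr)+H(x)$ with $S(z\,;u^-,u^+)_i=\int_0^{z_i}\sat(y\,;u_i^-,u_i^+)\,dy$, i.e.\ the primitive of the saturation evaluated at the \emph{actual controller argument} $-y-x_c$ rather than at $x_c$ alone. The payoff is decisive: $\dot V=-\bigl\|B\sat(-B^T\tfrac{\partial H}{\partial x}(x)-x_c\,;u^-,u^+)\bigr\|^2=-\|\dot x\|^2$, so $\dot V=0$ forces $\dot x=0$ pointwise. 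On the invariant set this gives $x\equiv\nu$ constant and $x_c(t)=B^T\tfrac{\partial H}{\partial x}(\nu)\,t+x_c(0)$ affine in $t$; if $B^T\tfrac{\partial H}{\partial x}(\nu)\neq0$ every saturation eventually locks at $u_i^-$ or $u_i^+$ according to the sign of $(B^T\tfrac{\partial H}{\partial x}(\nu))_i$, and $0=\sum_i(B^T\tfrac{\partial H}{\partial x}(\nu))_i c_i$ with $u_i^-\leq 0<u_i^+$ forces all these signs to be nonnegative, contradicting strong connectedness with respect to the flow constraints unless $B^T\tfrac{\partial H}{\partial x}(\nu)=0$. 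Your $V$ does satisfy $\dot V\le 0$ (that computation is correct), but $\dot V=0$ only yields an edge-by-edge alternative that does \emph{not} imply $\dot x=0$; your subsequent claims (``boundedness of $M$ forces $Bu=0$,'' exclusion of trajectories whose active-constraint set keeps changing, the ``maximum-principle / flow-following argument'') are precisely the content of the theorem and are not carried out. In addition, your $\Psi_j$ vanishes identically for $x_{c,j}\ge 0$ on uni-directional edges ($u_j^-=0$), so properness of $V$ in $x_c$ — which you defer — actually fails and cannot be ``treated directly.''

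Your necessity argument is sound and in fact more explicit than the paper's one-line version: the cut $W$ of vertices reachable from $v_1$ under compatible orientations has all its cut edges uni-directional into $W$, so $\sum_{i\in W^c}x_i$ is nonincreasing while $\mathds{1}_n^Tx$ is conserved, which blocks convergence to the unique balanced configuration determined by $\mathds{1}_n^Tx(0)$. Keep that part; for sufficiency, either adopt the paper's Lyapunov function or supply the missing identification of the largest invariant set for yours.
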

\begin{proof}
{\it Sufficiency}:
Consider the  Lyapunov
function given by
\begin{equation}\label{sat-Lyapunov}
V(x,x_c)=\mathds{1}^{T}_m S\big(-B^{T}\frac{\partial H}{\partial
x}(x)-x_c\,;u^-,u^+\big)+H(x),
\end{equation}
with
\begin{equation}
S(x\,;u^-,u^+)_i:=\int_0^{x_i} \sat(y\,;u^-_i,u^+_i)dy.
\end{equation}
It can be easily verified that $V$ is positive
definitive, radially unbounded and $C^1$. Its time-derivative is given as
\begin{equation}
\begin{aligned}
\dot{V} & =  -\sat^{T}\big(-B^{T}\frac{\partial H}{\partial
x}(x)-x_c;u^-,u^+\big)B^{T}B\sat\big(-B^{T}\frac{\partial H}{\partial
x}(x)-x_c\,;u^-,u^+\big) \\
& \leqslant  0.
\end{aligned}
\end{equation}
By LaSalle's invariance principle, all trajectories will converge to the largest
invariant set, denoted as $\mathcal{I}$, contained in $\mathcal{K}=\{
(x,x_c)|\, B\sat\big(-B^T\frac{\partial H}{\partial
x}(x)-x_c\,;u^-,u^+\big)=0 \}$. Whenever $x\in\mathcal{K}$ it
follows
that $\dot{x}=0$ and thus $x(t)=\nu$ for some constant vector $\nu$.
Hence, since $\dot{x}_c=B^T\frac{\partial H}{\partial x}(x)$, it follows that
$x_c(t)=B^T\frac{\partial
H}{\partial x}(\nu) t+x_c(0)$.

Suppose now that $B^T\frac{\partial H}{\partial x}(\nu) \neq 0$. Then
for $t$ large enough
\begin{equation}
\begin{aligned}
0 & = \frac{\partial^T H}{\partial x}(\nu)B\sat\big(-B^T\frac{\partial
H}{\partial
x}(\nu)-B^T\frac{\partial H}{\partial x}(\nu) t-x_c(0),u^-,u^+\big) \\
&
=\sum_{i=1}^{m}\big(B^T\frac{\partial
H}{\partial x}(\nu)\big)_ic_i ,
\end{aligned}
\end{equation}
where
\begin{equation}
c_i  = \left\{ \begin{array}{ll}
u^-_i & \textrm{if $\big(B^T\frac{\partial
H}{\partial x}(\nu)\big)_i>0$},\\
u^+_i & \textrm{if $\big(B^T\frac{\partial
H}{\partial x}(\nu)\big)_i<0$}.
\end{array} \right.
\end{equation}
Hence , in view of $u^-_i\leqslant0<u^+_i,$ we have $\big(B^T\frac{\partial
H}{\partial x}(\nu)\big)_i\geqslant0$, for $i=1,\ldots,m$.
However since the graph is strongly connected with respect to the flow
constraints, if
$\big(B^T\frac{\partial
H}{\partial x}(\nu)\big)_i>0,$ then there exists $j$ such that
$\big(B^T\frac{\partial
H}{\partial x}(\nu)\big)_j<0$. This yields a
contradiction. We conclude that $B^T\frac{\partial
H}{\partial x}(\nu)=0$, which implies $\frac{\partial
H}{\partial x}(\nu)=\alpha\mathds{1}_n$, and thus all trajectories converge to
$\mathcal{E}_{\mathrm{tot}}$.

{\it Necessity}:
Assume without loss of generality that the graph is weakly connected. (Otherwise
the same analysis can be performed on every connected component.)
If the graph is not strongly connected with respect to the flow constraints then there is a pair of vertices $v_i, v_j$ for which there exists a compatible orientation and a directed path from $v_i$ to
$v_j$, but not a compatible orientation and directed path from $v_j$ to $v_i$. In other words, there can be
positive flow from $v_i$ to $v_j$, but not vice versa. Then for suitable
initial condition, $\frac{\partial H}{\partial x_i} (x(t) < \frac{\partial
H}{\partial
x_j}(x(t) $ for all $t\geqslant0,$ and thus there is no convergence to
$\mathcal{E}_{\mathrm{tot}}$.
\end{proof}

\begin{remark}
Note that for $a_i \rightarrow -\infty,\,b_i\rightarrow \infty,$ the Lyapunov
function
$(\ref{sat-Lyapunov})$ tends to the function
$H(x)+\frac{1}{2}\|B^T\frac{\partial H}{\partial x}(x)+x_c\|^2$, which is
different from the Lyapunov function $H(x)+\frac{1}{2}\|x_c\|^2$ used in the
previous section.
\end{remark}
In the special case that the flow constraints are such that {\it all} the
flows $u_i$ can follow both directions, we obtain the following corollary.
\begin{corollary}\label{twoside-sat-cor}
For a network with constraint intervals $[u^-_i,u^+_i]$ with $u^-_i<0<u^+_i, i=1,\ldots,m,$ the trajectories of the
closed-loop system $(\ref{closedloop-sat})$ will converge to the set $(\ref{set-zerodist})$
if and only if the network is {\it weakly connected}.
\end{corollary}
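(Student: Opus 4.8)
The plan is to derive the corollary directly from Theorem~\ref{th:zerodisturbance}, by showing that, under the standing hypothesis $u^-_i < 0 < u^+_i$ for all $i$, the graph is strongly connected with respect to the flow constraints if and only if it is weakly connected. Given this equivalence the statement is immediate, since Theorem~\ref{th:zerodisturbance} asserts convergence to the set $(\ref{set-zerodist})$ exactly when the graph is strongly connected with respect to the flow constraints.

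The key observation is that when an edge $i$ is bi-directional in the strong sense $u^-_i < 0 < u^+_i$, reversing its orientation keeps it compatible with the flow constraints. Indeed, reversing edge $i$ replaces its constraint interval $[u^-_i,u^+_i]$ by $[-u^+_i,-u^-_i]$ (using the identity $\sat(-x\,;u_i^-,u_i^+)=-\sat(x\,;-u_i^+,-u_i^-)$ already recorded above), and $-u^+_i < 0 < -u^-_i$, so $(\ref{assumption})$ still holds. Hence any orientation obtained from the standing compatible orientation by reversing an arbitrary subset of edges is again compatible.

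For the implication ``weakly connected $\Rightarrow$ strongly connected with respect to the flow constraints'': given two vertices $v_1,v_2$, weak connectedness yields an undirected path $v_1=w_0,w_1,\ldots,w_\ell=v_2$. Orient each edge of this path so that it points from $w_{k-1}$ to $w_k$, and leave every other edge as in the standing orientation. By the previous paragraph this orientation is compatible with the flow constraints, and by construction it contains a directed path from $v_1$ to $v_2$. Since $v_1,v_2$ were arbitrary, the graph is strongly connected with respect to the flow constraints. The converse is trivial: a directed path with respect to some orientation is in particular an undirected path, so strong connectedness with respect to the flow constraints forces every pair of vertices to be joined by an undirected path, i.e.\ weak connectedness.

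There is essentially no analytic obstacle; the only point requiring a moment's care is the claim that reversing bi-directional edges preserves compatibility, which is precisely what makes ``strong connectedness with respect to the flow constraints'' collapse to ordinary weak connectedness when every edge is bi-directional. It is worth noting that this argument fails the instant some edge is uni-directional ($u^-_j=0$), since such an edge cannot be reversed while staying compatible, which is why the general setting of Theorem~\ref{th:zerodisturbance} genuinely needs the stronger notion.
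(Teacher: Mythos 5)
Your proposal is correct and follows essentially the same route as the paper: the paper's proof likewise observes that with all edges bi-directional, weak connectedness coincides with strong connectedness with respect to the flow constraints, and then invokes Theorem~\ref{th:zerodisturbance}. You merely spell out in more detail why reversing bi-directional edges preserves compatibility, which the paper leaves implicit.
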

\begin{proof}
In this case (since all the edges are bi-directional) weak connectedness is
equivalent to strong connectedness with respect to the flow constraints. If the
graph is not weakly connected then the components of $\frac{\partial H}{\partial
x}$ will only converge to a common value on every connected component.
\end{proof}

\section{Nonzero inflows and outflows}

In this section we deal with the general case of nonzero (but constant) inflows
and
outflows $\bar{d}$. Thus we consider the closed-loop system
\begin{equation}\label{closedloop-sat-disturb}
\begin{aligned}
\dot{x} & =  B\sat\big(-B^T\frac{\partial H}{\partial
x}(x)-x_c\,;u^-,u^+\big)+E\bar{d},
\\[2mm]
\dot{x}_c & =  B^T\frac{\partial H}{\partial x}(x),
\end{aligned}
\end{equation}
with $\im E\subset \im B.$

In order for the system to reach consensus, we need to impose conditions on the
magnitude of the in/outflows $\bar{d}$.
\begin{definition}
Given the constraint values $u^- < u^+$ the {\it permission set}
$\mathcal{P}(u^-,u^+)$ are
defined as
\[
\mathcal{P}_1(u^-,u^+)\times\mathcal{P}_2(u^-,u^+)\cdots\times\mathcal{P}_m(u^-
,u^+)\]
where the intervals $\mathcal{P}_i(u^-,u^+)$ is defined by:
\begin{equation}
\mathcal{P}_i(u^-,u^+) =\left\{ \begin{array}{ll}
(u^-_i,-u^-_i) & \textrm{if $0\in(u^-_i,u^+_i)$ and
$|u^-_i|\leqslant|u^+_i|$}\\
(-u^+_i,u^+_i) & \textrm{if $0\in(u^-_i,u^+_i)$ and $|u^-_i|>|u^+_i|$}\\
(0,u^+_{min}) & \textrm{if $(u^-_i,u^+_i)=(0,u^+_i)$},
\end{array} \right.
\end{equation}
where $u^+_{min}=\min\{u^+_i \mid i \textrm{ such that }
u^-_i=0\}$.
\end{definition}

\begin{theorem}\label{twoside-sat-dis}
Consider a graph $\mathcal{G}$ with dynamics
(\ref{closedloop-sat-disturb}).
Suppose that every edge allows bi-directional flow, i.e., $u^-_i<0<u^+_i, i=1,\ldots,m$.
Then for any in/outflow $\bar{d}$ for which
there exists $\bar{x}_c\in
\mathcal{P}(u^-,u^+)$
such that $E\bar{d}=B\bar{x}_c$, the
trajectories of (\ref{closedloop-sat-disturb}) will converge to
\begin{equation}
 \mathcal{E}_{\mathrm{tot}} = \{ (x,x_c) \mid \frac{\partial H}{\partial x}(x) =
\alpha \mathds{1}, \, \alpha \in \mathbb{R}, \, B\sat(-x_c\,;u^-,u^+)+E\bar{d} =
0
\, \}.
\end{equation}
if and only if the graph $\mathcal{G}$ is weakly connected.
\end{theorem}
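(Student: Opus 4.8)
The plan is to mimic the structure of the proof of Theorem~\ref{th:zerodisturbance}, but with the Lyapunov function shifted by the matching controller state $\bar{x}_c$, so that the extra term $E\bar{d}$ in the dynamics is cancelled by the $B\bar{x}_c$ contribution. Concretely, since $E\bar{d}=B\bar{x}_c$, I would substitute $\xi_c := x_c - \bar{x}_c$ and rewrite the first equation of (\ref{closedloop-sat-disturb}) as $\dot{x} = B\big(\sat(-B^T\frac{\partial H}{\partial x}(x)-x_c\,;u^-,u^+) + \bar{x}_c\big)$. The candidate Lyapunov function is the analogue of (\ref{sat-Lyapunov}), namely
\begin{equation*}
V(x,x_c) = \mathds{1}_m^T S\big(-B^T\tfrac{\partial H}{\partial x}(x)-x_c\,;u^-,u^+\big) + H(x) + \bar{x}_c^T x_c + g(\bar{x}_c),
\end{equation*}
where the affine term $\bar{x}_c^T x_c$ is added so that $\dot V$ produces the cross term needed to absorb $E\bar{d}$, and $g(\bar{x}_c)$ is a constant chosen to make $V$ nonnegative. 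First I would compute $\dot V$ along (\ref{closedloop-sat-disturb}); using $\frac{d}{dt}S(\cdot)_i = \sat(\cdot)_i\cdot\frac{d}{dt}(\cdot)_i$, the chain rule, and the matching identity $E\bar{d}=B\bar{x}_c$, I expect the mixed terms to telescope exactly as in the zero-disturbance case, leaving
\begin{equation*}
\dot V = -\big(\sat(\cdot)+\bar{x}_c\big)^T B^T B \big(\sat(\cdot)+\bar{x}_c\big) \leqslant 0,
\end{equation*}
i.e. $\dot V = -\|\dot x\|^2 \leqslant 0$.

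The next step is LaSalle: trajectories converge to the largest invariant set inside $\mathcal{K} = \{(x,x_c) \mid B(\sat(-B^T\frac{\partial H}{\partial x}(x)-x_c\,;u^-,u^+)+\bar{x}_c) = 0\}$. On $\mathcal{K}$ we have $\dot x = 0$, so $x(t)\equiv\nu$ and hence $x_c(t) = B^T\frac{\partial H}{\partial x}(\nu)\,t + x_c(0)$, a ray. Assuming $w := B^T\frac{\partial H}{\partial x}(\nu) \neq 0$, I would plug this ray into the invariance condition $w^T B(\sat(-w - wt - x_c(0)\,;u^-,u^+)+\bar{x}_c) = 0$ and let $t\to\infty$: the $i$-th component of the saturation argument, $-w_i - w_i t - x_c(0)_i$, tends to $-\mathrm{sign}(w_i)\cdot\infty$ when $w_i\neq 0$, so $\sat(\cdot)_i \to u_i^-$ if $w_i>0$ and $\to u_i^+$ if $w_i<0$ (and stays bounded otherwise). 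Thus in the limit
\begin{equation*}
0 = w^T B^T B\Big(\operatorname{sat-limit} + \bar{x}_c\Big) = \sum_{i\,:\,w_i>0} w_i (u_i^- + (\bar{x}_c)_i) + \sum_{i\,:\,w_i<0} w_i (u_i^+ + (\bar{x}_c)_i).
\end{equation*}
Here is where the permission-set hypothesis enters: $\bar{x}_c\in\mathcal{P}(u^-,u^+)$ means precisely that $(\bar{x}_c)_i\in(u_i^-,-u_i^-)\cap(-u_i^+,u_i^+)$ (the bi-directional case collapses both intervals), which forces $u_i^- + (\bar{x}_c)_i < 0 < u_i^+ + (\bar{x}_c)_i$ for every $i$. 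Therefore every summand above is strictly positive, so the sum cannot vanish unless $w=0$ — contradiction. Hence $B^T\frac{\partial H}{\partial x}(\nu) = 0$, and by weak connectedness $\frac{\partial H}{\partial x}(\nu) = \alpha\mathds{1}_n$; substituting back shows the limit point lies in $\mathcal{E}_{\mathrm{tot}}$. For necessity: if $\mathcal{G}$ is not weakly connected it splits into components and the common value $\alpha$ will differ across components, so consensus fails — exactly as in the corollaries above.

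The main obstacle I anticipate is twofold. First, verifying that the affine correction $\bar{x}_c^T x_c$ yields exactly $\dot V = -\|\dot x\|^2$ with no leftover indefinite terms — this requires carefully tracking that $\frac{d}{dt}\big(\mathds{1}_m^T S(-B^T\frac{\partial H}{\partial x}(x)-x_c)\big)$ contributes a term $-\sat(\cdot)^T(B^TB\dot{x}\ \text{-type terms}) $ that combines with $\dot H = \frac{\partial^T H}{\partial x}\dot x$ and with $\bar{x}_c^T\dot x_c = \bar{x}_c^T B^T\frac{\partial H}{\partial x}(x)$; the matching equation $E\bar d = B\bar x_c$ is what makes the $E\bar d$ term collapse. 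Second, and more delicate, is ensuring $V$ is bounded below (radially unbounded) so that LaSalle applies: because $S_i$ grows only linearly in its argument for large arguments while $H(x)$ grows superlinearly (Hessian positive definite, assumed radially unbounded-type behaviour), the quadratic-in-$x$ growth of $H$ dominates the linear-in-$(B^T\partial H/\partial x + x_c)$ growth of the $S$ term, and the affine term $\bar x_c^T x_c$ is dominated by the coercive part in $x_c$ coming from $S$; I would state this as a lemma and choose $g(\bar x_c)$ to normalise the minimum to a convenient value. Note also that the theorem as stated restricts to the bi-directional case $u_i^- < 0 < u_i^+$; there the two permission intervals coincide and the sign argument is cleanest, so the uni-directional subtleties of Section~4 do not arise here.
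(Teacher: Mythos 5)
Your overall architecture (shift by the matching state $\bar{x}_c$, a modified $C^1$ Lyapunov function, LaSalle on the ray $x_c(t)=B^T\frac{\partial H}{\partial x}(\nu)t+x_c(0)$, and the observation that $\bar{x}_c\in\mathcal{P}(u^-,u^+)$ forces $u_i^-+\bar{x}_{ci}<0<u_i^++\bar{x}_{ci}$) is the right one, and the sign argument at the end is sound (note only that the surviving summands are strictly \emph{negative}, not positive). However, the specific Lyapunov function you propose does not work, and this is precisely the step you flagged but did not verify. With $V=\mathds{1}_m^TS(w)+H(x)+\bar{x}_c^Tx_c+g$, where $w=-B^T\frac{\partial H}{\partial x}(x)-x_c$ and $s=\sat(w;u^-,u^+)$, a direct computation along (\ref{closedloop-sat-disturb}) gives
\begin{equation*}
\dot V=-s^TB^T\tfrac{\partial^2H}{\partial x^2}(x)B\,(s+\bar{x}_c)+2\,\tfrac{\partial^TH}{\partial x}(x)B\bar{x}_c ,
\end{equation*}
which is indefinite: the two contributions $\frac{\partial^TH}{\partial x}\dot x$ and $\bar{x}_c^T\dot x_c$ produce the \emph{same} term $\frac{\partial^TH}{\partial x}B\bar{x}_c$ twice rather than cancelling, and the quadratic part is not a complete square (take $s=0$, $\frac{\partial^TH}{\partial x}B\bar x_c>0$ to see $\dot V>0$). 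The correct affine correction is $-\bar{x}_c^T\bigl(B^T\frac{\partial H}{\partial x}(x)+x_c\bigr)=\bar{x}_c^Tw$ (plus a constant), which yields $\dot V=-(s+\bar{x}_c)^TB^T\frac{\partial^2H}{\partial x^2}(x)B(s+\bar{x}_c)\leq 0$, i.e.\ the missing $-\bar{x}_c^TB^T\frac{\partial H}{\partial x}(x)$ piece is essential.

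The paper avoids this bookkeeping entirely: using the identity $\sat(x-\eta;u^-,u^+)+\eta=\sat(x;u^-+\eta,u^++\eta)$ together with $E\bar d=B\bar x_c$, the closed loop in the variable $\tilde x_c=x_c-\bar x_c$ becomes \emph{exactly} the zero-disturbance system (\ref{closedloop-sat}) with shifted constraint intervals $[u^-+\bar x_c,\,u^++\bar x_c]$, which still satisfy $(u^-+\bar x_c)_i<0<(u^++\bar x_c)_i$ because $\bar x_c$ lies in the permission set; Corollary \ref{twoside-sat-cor} then applies verbatim, including necessity. If you expand $\mathds{1}^TS(\cdot\,;u^-+\bar x_c,u^++\bar x_c)$ using that same identity you recover precisely the corrected affine term $\bar x_c^Tw$ above, so your approach is repairable and in fact coincides with the paper's after the fix; but as written the proposal has a genuine gap at its central inequality $\dot V\leq 0$.
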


\begin{proof}
By the matching condition $E\bar{d}=B\bar{x}_c$ and the identity
\begin{equation}\label{identity}
\sat(x-\eta\,;u^-,u^+)+\eta=\sat(x\,;u^-+\eta,u^++\eta),\,
\forall \eta\in\mathbb{R}^n,
\end{equation}
the system $(\ref{closedloop-sat-disturb})$ can be
written as
\begin{equation}
 \begin{aligned}
  \dot{x} & = B\sat(-B^T\frac{\partial H}{\partial
x}(x)-\tilde{x}_c\,;u^-+\bar{x}_c,u^++\bar{x}_c), \\
\dot{\tilde{x}}_c & = B^T\frac{\partial H}{\partial
x}(x),
 \end{aligned}
\end{equation}
where $\tilde{x}_c=x_c-\bar{x}_c$.
Since by construction $(u^-+\bar{x}_c)_i<0,\,i=1,\ldots,m,$
the proof now follows from Corollary $\ref{twoside-sat-cor}$.
\end{proof}
The following theorem covers the case that every edge is uni-directional.
\begin{theorem}\label{sat-dis}
Consider a network $\mathcal{G}$ with dynamics
(\ref{closedloop-sat-disturb}) with flow constraints such that $u^-_i=0,
i=1,\ldots,m$
(uni-directional flow). Then for any $u^+\in\mathbb{R}_+^m$ and
any in/outflow $\bar{d}$
for which there exists $\bar{x}_c \in
\mathcal{P}(0_m,u^+)$
such that $E\bar{d}=B\bar{x}_c$, the
trajectories of (\ref{closedloop-sat-disturb}) converge to
\begin{equation}
 \mathcal{E}_{\mathrm{tot}} = \{ (x,x_c) \mid \frac{\partial H}{\partial x}(x) =
\alpha \mathds{1}, \, \alpha \in \mathbb{R}, \, B\sat(-x_c\,;0_m,u^+)+E\bar{d} =
0
\, \},
\end{equation}
if and only if the graph in the (only) orientation
compatible with the flow constraints is {\it strongly connected} and {\it
balanced}.
\end{theorem}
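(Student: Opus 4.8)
The plan is to follow the pattern of Theorems~\ref{th:zerodisturbance} and~\ref{twoside-sat-dis}. First I would absorb the in/outflow by the shift $\tilde x_c:=x_c-\bar x_c$: using the matching condition $E\bar d=B\bar x_c$ together with the identity (\ref{identity}), the closed loop (\ref{closedloop-sat-disturb}) becomes
\[
\dot x = B\,\sat\!\big(-B^{T}\tfrac{\partial H}{\partial x}(x)-\tilde x_c\,;\,\bar x_c,\,u^{+}+\bar x_c\big),\qquad
\dot{\tilde x}_c = B^{T}\tfrac{\partial H}{\partial x}(x),
\]
a system of exactly the same shape but with edgewise constraint intervals $[\bar x_{c,i},u^{+}_i+\bar x_{c,i}]$ whose lower bounds $\bar x_{c,i}$ are now \emph{strictly positive} (and $<u^{+}_{min}$), since $\bar x_c\in\mathcal P(0_m,u^{+})=(0,u^{+}_{min})^m$. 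For the sufficiency part I would then run the $C^{1}$-Lyapunov/LaSalle argument of Theorem~\ref{th:zerodisturbance} for this shifted system, taking
\[
V(x,\tilde x_c)=\mathds{1}^{T}_m S\big(-B^{T}\tfrac{\partial H}{\partial x}(x)-\tilde x_c\,;\,\bar x_c,\,u^{+}+\bar x_c\big)+H(x),
\]
so that, with $f:=\sat(-B^{T}\tfrac{\partial H}{\partial x}(x)-\tilde x_c\,;\,\bar x_c,u^{+}+\bar x_c)$, one gets $\dot V=-(Bf)^{T}\tfrac{\partial^{2}H}{\partial x^{2}}(x)(Bf)\le 0$ and hence convergence to the largest invariant set inside $\{\,Bf=0\,\}$; on that set $x\equiv\nu$ is constant and $\tilde x_c(t)=B^{T}\tfrac{\partial H}{\partial x}(\nu)\,t+\tilde x_c(0)$.

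The heart of the sufficiency is a contradiction argument in the spirit of Theorem~\ref{th:zerodisturbance}. Suppose $p:=\tfrac{\partial H}{\partial x}(\nu)$ were non-constant and let $V_M$ be the set of vertices at which $p$ is maximal — proper and nonempty, weak connectedness following from strong connectedness. Every edge leaving $V_M$ has a negative component of $B^{T}p$, hence its flow tends to $u^{+}_e+\bar x_{c,e}$; every edge entering $V_M$ has a positive component, hence its flow tends to $\bar x_{c,e}$; edges internal to $V_M$ do not contribute to $\mathds{1}^{T}_{V_M}Bf$. Since $Bf(t)\equiv 0$ on the invariant set, letting $t\to\infty$ in $\mathds{1}^{T}_{V_M}Bf(t)=0$ gives
\[
\sum_{e\text{ into }V_M}\bar x_{c,e}\;=\;\sum_{e\text{ out of }V_M}\big(u^{+}_e+\bar x_{c,e}\big).
\]
Here \emph{balancedness} is crucial: it forces $\#\{e\text{ into }V_M\}=\#\{e\text{ out of }V_M\}=:q$, while strong connectedness forces $q\ge 1$; since $\bar x_{c,e}<u^{+}_{min}\le u^{+}_e$ for every $e$, the left-hand side is strictly less than $q\,u^{+}_{min}$ and the right-hand side is at least $q\,u^{+}_{min}$ — a contradiction. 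Therefore $B^{T}p=0$, i.e.\ $\tfrac{\partial H}{\partial x}(\nu)=\alpha\mathds{1}$, and reversing the shift identifies the limit set with $\mathcal{E}_{\mathrm{tot}}$. The one delicate point of this half is the boundedness needed to apply LaSalle: as already for Theorem~\ref{th:zerodisturbance}, $V$ is not radially unbounded once uni-directional edges are present, so I would confine $x$ via the conserved quantity $\mathds{1}^{T}x$ and strict convexity of $H$, and exclude escape of $\tilde x_c$ along a direction on which $V$ is flat by a further argument that again uses balancedness.

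For the necessity direction, the ``strongly connected'' part is easy and needs no shift. If the graph is weakly but not strongly connected, pick a source strongly connected component $\mathcal{V}_1$ (no edge enters $\mathcal{V}_1$). For any admissible $\bar d$ one has $\tfrac{d}{dt}\sum_{k\in\mathcal{V}_1}x_k = -\sum_{e:\,\mathcal{V}_1\to\mathcal{V}_2}u_e + \mathds{1}^{T}_{\mathcal{V}_1}E\bar d$, and $\mathds{1}^{T}_{\mathcal{V}_1}E\bar d=\mathds{1}^{T}_{\mathcal{V}_1}B\bar x_c=-\sum_{e:\,\mathcal{V}_1\to\mathcal{V}_2}\bar x_{c,e}$, with each $\bar x_{c,e}>0$ and each $u_e\ge 0$; thus the derivative is $\le-\sum_{e:\,\mathcal{V}_1\to\mathcal{V}_2}\bar x_{c,e}<0$ for all $t$, so $\sum_{k\in\mathcal{V}_1}x_k\to-\infty$ and there is no convergence.

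The ``balanced'' part of necessity is where the real work lies, and the step I expect to be the main obstacle. I would construct, for a suitable admissible $\bar d$, an unbounded solution that never reaches $\mathcal{E}_{\mathrm{tot}}$: fix a non-constant potential $p$ and a $\nu$ with $\tfrac{\partial H}{\partial x}(\nu)=p$, and choose $x_c(0)$ so that along the candidate solution $x\equiv\nu$, $x_c(t)=x_c(0)+tB^{T}p$, every up-edge (resp.\ down-edge of the saturation, relative to $p$) is pinned at its lower (resp.\ upper) limit for all $t\ge 0$; the flow is then constant, $f\equiv f_\infty$ with $f_{\infty,e}=0$ on up-edges, $f_{\infty,e}=u^{+}_e$ on down-edges, and $f_{\infty,e}$ free in $[0,u^{+}_e]$ on level edges. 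This is a genuine trajectory of (\ref{closedloop-sat-disturb}) precisely when $Bf_\infty+E\bar d=0$, i.e.\ when $f_\infty+\bar x_c\in\ker B$, and it stays at the non-consensus value $\nu$, hence never converges to $\mathcal{E}_{\mathrm{tot}}$. So the whole necessity direction reduces to a circulation-feasibility statement: for a strongly connected, non-balanced graph and any $u^{+}$, one must exhibit a non-constant $p$ and a $\bar x_c\in(0,u^{+}_{min})^m$ with $f_\infty(p)+\bar x_c\in\ker B$ — equivalently, a strictly positive circulation that on the up-edges of $p$ lies in $(0,u^{+}_{min})$ and on the down-edges lies in $(u^{+}_e,u^{+}_e+u^{+}_{min})$. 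My plan would be to take $p=\mathds{1}_{\mathcal{V}\setminus W}$ for a vertex set $W$ having strictly more edges leaving than entering it — which exists precisely because the graph is not balanced (for instance $W$ the complement of a vertex whose out-degree exceeds its in-degree) — and to build the required circulation via Hoffman's circulation theorem, the imbalance of the cut $(W,\mathcal{V}\setminus W)$ being exactly what makes the Hoffman inequalities hold. Making this work for \emph{completely arbitrary} $u^{+}$ is the delicate point: because the down-edges of the cut carry flow exceeding their own capacities $u^{+}_e$, for highly non-uniform $u^{+}$ one may need to select $W$ more carefully, or to use a multi-valued $p$ so that most edges become ``level'' and hence essentially unconstrained; proving that ``strongly connected $+$ not balanced'' always permits such a choice, compatibly with the permission set $\mathcal{P}$, is the crux of the argument.
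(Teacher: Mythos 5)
Your sufficiency argument is essentially the paper's: the same shift $\tilde x_c=x_c-\bar x_c$ via (\ref{identity}), the same Lyapunov function, and LaSalle. Your final contradiction is a legitimate variant: the paper sums $\sum_i(B^T\frac{\partial H}{\partial x}(\nu))_i c_i$ globally, using $B\mathds{1}_m=0$ (balancedness) to get $\sum_i(B^T\frac{\partial H}{\partial x}(\nu))_i=0$ and then the weight inequality $\bar x_{c,i}<u^+_{\min}<u^+_j+\bar x_{c,j}$; you instead cut at the max-set $V_M$, using balancedness to equate the number of edges entering and leaving $V_M$ and the same weight inequality on the cut edges. Both work, and yours localizes the use of balancedness nicely. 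You also correctly flag a point the paper silently skips: after the shift the lower saturation bounds are strictly positive, so $S(\,\cdot\,;\bar x_c,u^++\bar x_c)$ is not bounded below and $V$ is not radially unbounded, so precompactness of trajectories (needed for LaSalle) requires an extra argument that neither you nor the paper actually supplies. Your divergence argument for necessity of strong connectedness (source component drains at rate at least $\sum_e\bar x_{c,e}>0$) is fine and more explicit than the paper's.

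The genuine gap is in the necessity of balancedness, and it is partly self-inflicted. The theorem's quantification is ``$(\forall u^+\ \forall\ \text{admissible}\ \bar d:\ \text{convergence})\iff(\text{SC and balanced})$'', so for necessity you only need to exhibit \emph{one} pair $(u^+,\bar d)$ for which convergence fails; the paper's proof explicitly takes $u^+=\mathds{1}_m$. By insisting on arbitrary $u^+$ you have set yourself a strictly harder circulation-feasibility problem and then left it unsolved (``the crux of the argument''). The missing idea, which is exactly what Lemma \ref{lemma} supplies, is the cycle-cover construction with uniform capacities: take a minimal covering set of cycles $\mathcal{T}=(C_1,\dots,C_k)$ and let $T\in\mathbb{Z}^m_{>0}$ be the edge-multiplicity vector, so $BT=0$ and $\lambda T$ is a strictly positive circulation for every $\lambda>0$. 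If the graph is strongly connected but unbalanced, Lemma \ref{lemma} forces $l_{\max}:=\max_i T_i\geq 2$ while every cycle retains a non-overlapped edge ($T_i=1$); choosing $\lambda\in(1/l_{\max},1)$ makes $\lambda T_i>1$ exactly on the maximally overlapped edges (these become the down-edges of the potential, pinned at $1+\bar x_{c,i}$) and $\lambda T_i=\lambda<1$ on the non-overlapped ones (up-edges, pinned at $\bar x_{c,i}$), with the remaining edges level; solving (\ref{e1+e2}) then yields $\bar x_c\in(0,1)^m=\mathcal{P}(0_m,\mathds{1}_m)$ and an initial condition producing a stationary non-consensus trajectory. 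Without this (or an equivalent Hoffman-type feasibility argument carried through to completion for at least one admissible $u^+$), your necessity direction does not close.
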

In order to prove Theorem $\ref{sat-dis}$ we need the following lemma. Recall
that a directed graph is {\it balanced} if every vertex has in-degree
(number of incoming edges) equal to out-degree (number of outgoing edges).
Furthermore, we will say that two {\it cycles} of a graph are
{\it non-overlapping} if they do not have any edges in common.

\begin{lemma}\label{lemma}
A strongly connected graph is balanced if and only if it can
be covered by non-overlapping cycles.
\end{lemma}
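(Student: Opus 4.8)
The plan is to prove both directions by induction, using the standard fact that in a balanced directed graph every edge lies on a cycle. First I would handle the easier "if" direction: suppose $\mathcal{G}$ is strongly connected and its edge set decomposes as a disjoint union $\mathcal{E} = C_1 \sqcup \cdots \sqcup C_r$ of cycles. Each cycle $C_\ell$ contributes exactly one incoming and one outgoing edge to each vertex it passes through (and nothing to vertices it avoids), so each $C_\ell$ preserves the in-degree $=$ out-degree balance locally. Summing over $\ell$, every vertex of $\mathcal{G}$ has in-degree equal to out-degree, i.e. $\mathcal{G}$ is balanced. (Equivalently, in matrix terms each cycle $C_\ell$ has incidence-matrix columns summing to zero, hence $B\mathds{1}_m = \sum_\ell B_{C_\ell}\mathds{1} = 0$, which is exactly the balancedness characterization $\mathds{1}_m \in \ker B$ recalled in Section 2.)

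For the "only if" direction I would argue by induction on the number of edges $m$. If $\mathcal{G}$ is balanced with $m>0$, pick any edge $e_0 = (v_0,v_1)$. Since $v_1$ has out-degree $\geq 1$ (its in-degree is $\geq 1$ because of $e_0$, and balancedness forces out-degree $\geq 1$), there is an edge leaving $v_1$; continuing this way we generate a walk $v_0, v_1, v_2, \ldots$ that can always be extended, so by finiteness it must eventually revisit a vertex, producing a directed cycle $C$. Now delete the edges of $C$ from $\mathcal{G}$ to get $\mathcal{G}'$. Removing a cycle decreases both the in-degree and the out-degree of each vertex on $C$ by exactly one (and leaves other vertices untouched), so $\mathcal{G}'$ is again balanced, with strictly fewer edges. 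Each weakly connected component of $\mathcal{G}'$ is itself balanced and — here one invokes the classical fact that a balanced weakly connected graph is strongly connected — strongly connected, so the induction hypothesis applies to each component and yields a cover of its edges by non-overlapping cycles. Taking the union of $C$ with all these cycles covers $\mathcal{E}(\mathcal{G})$ by non-overlapping cycles, completing the induction.

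The main subtlety, and the step I would be most careful about, is the bookkeeping in the induction: after deleting the cycle $C$ the graph $\mathcal{G}'$ need no longer be connected at all, so one cannot directly invoke the inductive statement (which is phrased for strongly connected graphs). The clean way around this is to state and prove the slightly stronger claim "every balanced directed graph — connected or not — has its edge set covered by non-overlapping cycles," and then observe that for a strongly connected $\mathcal{G}$ this specializes to the lemma; the "only if" direction of this stronger claim needs only balancedness, not connectedness, in the induction. One should also double-check the degenerate base case $m=0$ (the empty edge set is vacuously covered by the empty family of cycles) and note that isolated vertices play no role since cycles only cover edges. Apart from this packaging issue, every step is elementary degree-counting.
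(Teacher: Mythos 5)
Your proof is correct, but it takes a genuinely different route from the paper. For necessity (balanced $\Rightarrow$ decomposable) you use the classical cycle-peeling induction: in a balanced graph with at least one edge, a maximal walk must close up into a simple cycle; deleting that cycle preserves balancedness, so induction on the number of edges gives an edge-disjoint cycle decomposition. You correctly spot the one real subtlety — that the residual graph need not be connected — and fix it by proving the stronger statement for \emph{all} balanced digraphs, so that strong connectedness is only used to specialize back to the lemma as stated. The paper instead argues by contradiction starting from a \emph{minimal} covering set of (possibly overlapping) cycles, which exists because the graph is strongly connected: it first shows the shared edges cannot contain a whole cycle (via a surgery that recombines four covering cycles into three), and then finds a shared edge whose head has out-degree exceeding its in-degree, contradicting balancedness. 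Your argument is shorter, more elementary, and avoids the somewhat delicate minimality/surgery step; what the paper's approach buys is that the minimal covering set $\mathcal{T}$ and the multiplicity vector $T$ it introduces are reused directly in the necessity part of Theorem 5, whereas your decomposition would have to be supplemented there. Both proofs are sound; yours is arguably the cleaner self-contained proof of the lemma itself.
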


\begin{proof}
{\it Sufficiency}: If a graph can be covered by
non-overlapping cycles, then every vertex necessarily has the same in-degree and
out-degree; so this graph is balanced.

{\it Necessity}: Since the graph is strongly connected, every two vertices can be
connected by a directed path, and the graph can be covered by cycles. Now suppose
that the graph can{\it not} be covered by non-overlapping cycles. We will show
that this implies that the graph is not balanced.

Let $k$ be the smallest number of cycles needed to cover the graph, and let
$\mathcal{T}=(C_1,C_2,\ldots,C_k)$ be a covering set of cycles. According to our
assumption, at least one edge of the graph is shared by two or more cycles in $\mathcal{T}$.
We claim that the set of shared edges can not contain any cycles. Indeed,
suppose there is one cycle, denoted as $\mathcal{D}$ (depicted in Fig
\ref{subgraph_TD}(a)), whose edges are all shared by elements of $\mathcal{T}$.
If $\mathcal{D} \in \mathcal{T}$, then obviously $\mathcal{T}$ is not a minimal
covering set, since by deleting the cycle $\mathcal{D}$ from $\mathcal{T}$ we
have a covering set of $k-1$ elements.

Thus $\mathcal{D} \notin \mathcal{T}$. It can be seen that the minimal number
$c$ of cycles in $\mathcal{T}$ which cover $\mathcal{D}$ twice is at least $4$.
Denote such a minimal set of $c$ cycles in $\mathcal{T}$ which cover
$\mathcal{D}$ by $\mathcal{T}_\mathcal{D}$. We will now show that by combining
these $c$ cycles with the cycle $\mathcal{D}$ there exist $3$ cycles in the
original graph $\mathcal{G}$ which cover the subgraph given by
$\mathcal{T}_\mathcal{D}$; thus reaching a contradiction with the minimality of
$\mathcal{T}$. The construction of these $3$ cycles is indicated in Figure
\ref{subgraph_TD}. Consider for simplicity the case that $4$ cycles in
$\mathcal{T}$, denoted by $C_1, C_2, C_3, C_4$ cover $\mathcal{D}$ twice.
Combining (depending on the orientation of the cycles) part of $C_1$ with part
of $C_3$, and part of $C_2$ with part of $C_4$ (see Figure \ref{subgraph_TD}),
we can define $2$ cycles which together with the cycle $\mathcal{D}$ yields a
set of $3$ cycles which cover the subgraph spanned by $C_1, C_2, C_3, C_4$.

In conclusion, there must exist at least one shared edge, say $(v_i,v_j)$, such that all edges with
tail-vertex $v_j$ are used only once in $\mathcal{T}$. But this implies that $v_j$ has larger
out-degree than in-degree, i.e., the graph is unbalanced.
\end{proof}

\begin{figure}
  \centering
  \subfigure[The cycle $\mathcal{D}$ split into two parts $\mathcal{D}_1, \mathcal{D}_2$.]{
    \label{fig:subfig:a} %% label for first subfigure
    \includegraphics[width=1.4in]{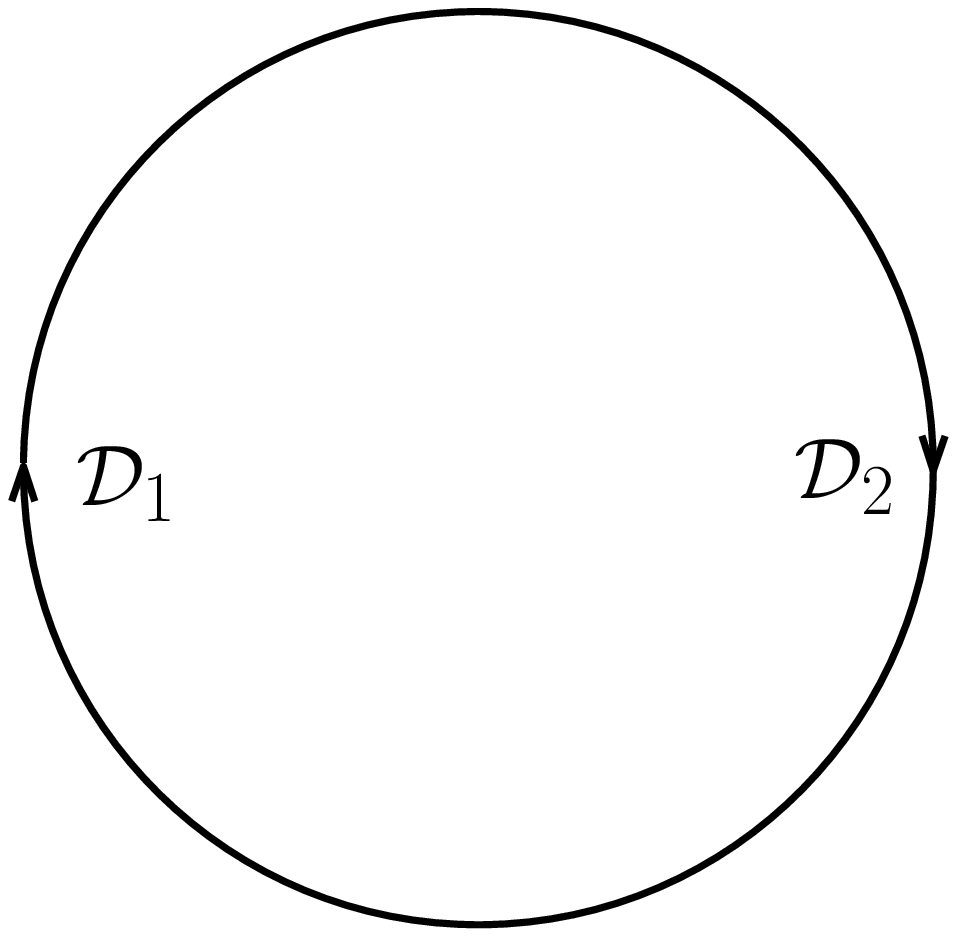}}
  \hspace{1in}
  \subfigure[The subgraph $\mathcal{T}_\mathcal{D}$]{
    \label{fig:subfig:b} %% label for second subfigure
    \includegraphics[width=2.7in]{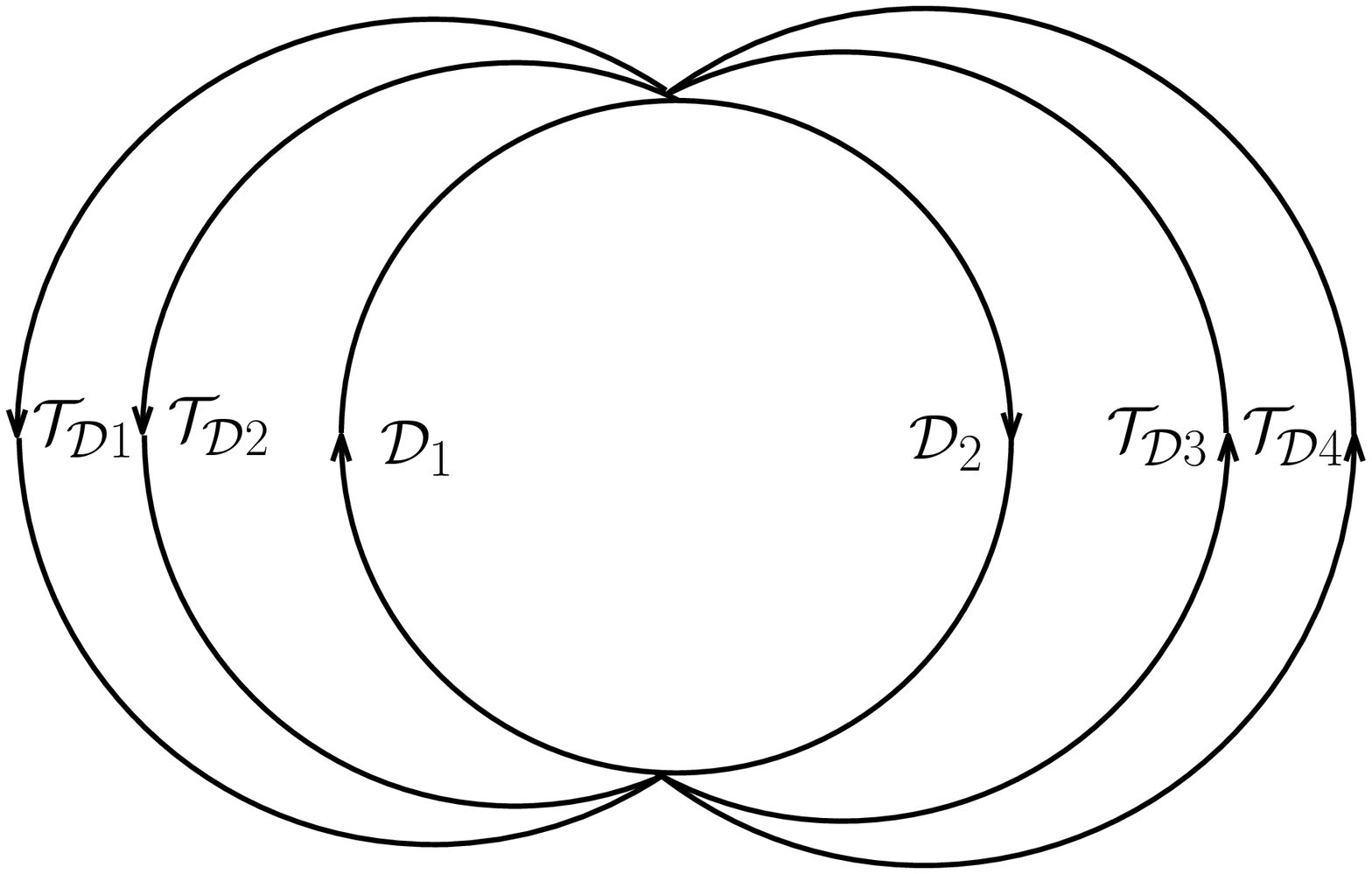}}
  \caption{(a). The cycle $\mathcal{D}$. We divide the edges of $\mathcal{D}$
into two disjoint sets: $\mathcal{D}_1$ contains the left part of $\mathcal{D}$
and $\mathcal{D}_2$ contains the rest. (b). The subgraph given by
$\mathcal{T}_{\mathcal{D}}$. Without $\mathcal{D}$, we need at least $4$ cycles to cover
$\mathcal{D}$ twice; these cycles are given as
$C_1=\mathcal{T}_{\mathcal{D}1}\cup\mathcal{D}_1$,$C_2=\mathcal{T}_{\mathcal
{D}2}\cup\mathcal{D}_1$,$C_3=\mathcal{T}_{\mathcal{D}3}\cup\mathcal{D}
_2$ and $C_4=\mathcal{T}_{\mathcal{D}4}\cup\mathcal{D}_2$. It follows that
$\mathcal{T}_{\mathcal{D}}$ is also covered by the $3$ cycles:
$\mathcal{D}$(clockwise),
$\mathcal{T}_{\mathcal{D}2} \cup \mathcal{T}_{\mathcal{D}3}$ (counterclockwise)
and
$\mathcal{T}_{\mathcal{D}1}\cup \mathcal{T}_{\mathcal{D}4}$ (counterclockwise).}
  \label{subgraph_TD} %% label for entire figure
\end{figure}

\begin{proof}[Proof of Theorem $\ref{sat-dis}$]
{\it Sufficiency}:
By using $(\ref{identity})$ we rewrite the system as
 \begin{equation}
 \begin{aligned}
  \dot{x} & = B\sat\big(-B^T\frac{\partial H}{\partial
x}(x)-\tilde{x}_c\,;\bar{x}_c,u^++\bar{x}_c\big), \\
\dot{\tilde{x}}_c & = B^T\frac{\partial H}{\partial
x}(x),
 \end{aligned}
\end{equation}
where $\tilde{x}_c=x_c-\bar{x}_c$.
Consider now the Lyapunov function
\begin{equation}
 V(x,x_c)=\mathds{1}^{T} S\big(-B^{T}\frac{\partial H}{\partial
x}(x)-\tilde{x}_c\,;\bar{x}_c,u^++\bar{x}_c\big)+H(x).
\end{equation}
Similar to the proof of Theorem \ref{th:zerodisturbance}, if
a solution $(x(t),\tilde{x}_c(t))$ is in the largest invariant set $\mathcal{I}$
contained in $\{(x,\tilde{x}_c) \mid  \dot{V}=0\,\}$, then $x$ is a constant
vector,
denoted as $\nu$. Furthermore, $\mathcal{I}$ is given as
\begin{equation}
\begin{aligned}
 &\mathcal{I}=\{(\nu,\tilde{x}_c)\mid \tilde{x}_c=B^T\frac{\partial H}{\partial
x}(\nu)
t+\tilde{x}_c(0),\\
&B\sat\big(-B^T\frac{\partial H}{\partial
x}(\nu)-B^T\frac{\partial H}{\partial
x}(\nu)
t-\tilde{x}_c(0)\,;\bar{x}_c,u^++\bar{x}_c\big)=0, \forall t\geq 0\}.
\end{aligned}
\end{equation}
Suppose now that $B^T\frac{\partial H}{\partial
x}(\nu) \neq 0$. Then for $t$ large enough, we have
\begin{equation}\label{contradiction-directed-disturbance}
\begin{aligned}
0 &= \frac{\partial^T H}{\partial
x}(\nu) B\sat\big(-B^T\frac{\partial H}{\partial
x}(\nu)-B^T\frac{\partial H}{\partial
x}(\nu)
t-\tilde{x}_c(0)\,;\bar{x}_c,u^++\bar{x}_c\big) \\
& =
\sum_{i=1}^{m}\big(B^T\frac{\partial H}{\partial
x}(\nu)\big)_ic_i,
 \end{aligned}
\end{equation}
where
\begin{equation}
c_i  = \left\{ \begin{array}{ll}
\bar{x}_{ci} & \textrm{if $\big(B^T\frac{\partial
H}{\partial x}(\nu)\big)_i>0$},\\
u^+_i+\bar{x}_{ci} & \textrm{if $\big(B^T\frac{\partial
H}{\partial x}(\nu)\big)_i<0$}.
\end{array} \right.
\end{equation}
Since the graph is balanced we have $B\mathds{1}_m=0$, and thus
\begin{equation}
\sum_{i=1}^{m}\big(B^T\frac{\partial H}{\partial
x}(\nu)\big)_i=0.
\end{equation}
By the definition of the
permission set $\mathcal{P}(0_m,u^+)$, $0<\bar{x}_{ci}<u^+_j+\bar{x}_{cj}$ for
any $i,j=1,2,\ldots,m$, so
\begin{equation}
\sum_{i=1}^{m}\big(B^T\frac{\partial H}{\partial
x}(\nu)\big)_ic_i<0.
\end{equation}
This yields a contradiction. Hence $B^T\frac{\partial H}{\partial
x}(\nu)=0$ and therefore
\[
\mathcal{I}=\{(\nu,\tilde{x}_c)|\frac{\partial H}{\partial
x}(\nu)=c\mathds{1}_n,B\sat(-\tilde{x}_c\,;\bar{x}_c,u^++\bar{x}_c)=0\}
\]

\noindent
{\it Necessity}:
First, if the graph $\mathcal{G}$ is not strongly connected, by the same
argument in Theorem \ref{th:zerodisturbance}, it can be easily seen that
$\frac{\partial H}{\partial x}$ will not reach consensus.

Now we will show that if the
strongly connected network is unbalanced, then there exists a constraint interval $[0_m,u^+]$ and an in/outflow $\bar{d}$ for which there exists $\bar{x}_c\in \mathcal{P}(0_m,u^+)$ such that
$E\bar{d}=B\bar{x}_c$ while $\frac{\partial H}{\partial x}(x)$ is not converging to consensus.

For simplicity of exposition we shall take the constraint interval as $[0_m,\mathds{1}_m]$.

As in the proof of Lemma $\ref{lemma}$ we let $k$ be the minimal number of cycles to cover $\mathcal{G}$, and we let $\mathcal{T}=(C_1,\ldots,C_k)$ be a minimal covering set for $\mathcal{G}$. With some abuse of notation
\begin{equation}
 BC_i=0, i=1, \ldots,k
\end{equation}
where $C_i$ is the $m$-dimensional vector whose $j$-th component is
equal to the number of times the $j$-th edge appears in the cycle $C_i$.

In the following, we will prove that there exist $B^T\frac{\partial H}{\partial
x}(\nu)\neq0, \bar{x}_c\in\mathcal{P}(0_m,\mathds{1}_m)$,
$\tilde{x}_c(0)$ and $\lambda\in\mathbb{R}$, such that
\begin{equation}\label{constructed}
 \sat\big(-B^T\frac{\partial H}{\partial
x}(\nu)-B^T\frac{\partial H}{\partial
x}(\nu)
t-\tilde{x}_c(0)\,;\bar{x}_c,\mathds{1}_m+\bar{x}_c\big)=\lambda T,\, \forall
t\geq 0,
\end{equation}
where $\nu$ is the equilibrium value of $x$ as above and $T$ is the
$m$-dimensional vector whose $i$-th component is the number of cycles in
$\mathcal{T}$ which contains $i$-th edge. This implies that the
system has an equilibrium which
satisfies $\frac{\partial H}{\partial
x}(\nu)\notin\spa\{\mathds{1}_n\}$.

Consider as above a minimal covering set $\mathcal{T}=(C_1,\ldots,C_k)$ for
$\mathcal{G}$. Let $T_{\max}:= \{ T_i \mid i=1,2,\ldots,m\}$, and denote
$\mathcal{E}_1 = \{ i \mbox{-th edge} \mid T_i =T_{\max} \}$. Every cycle in
$\mathcal{T}$ has
at least one non-overlapped edge (see the proof of Lemma \ref{lemma}), and we
denote
by $\mathcal{E}_2$ the set of all the non-overlapped edges in the cycles in
$\mathcal{T}$ which contain at least one edge which is overlapped $l_{\max}$
times.

In the last step, we will make the flows through the edges in $\mathcal{E}_1$
reach the upper bounds of the constraint intervals, and the flows through the edges in $\mathcal{E}_2$ reach their lower
bounds. By taking
\begin{equation}\label{bounds}
\begin{cases}
\frac{\partial H}{\partial x}(\nu)_{j}<\frac{\partial H}{\partial x}(\nu)_{i}, &
(v_i,v_j)\in\mathcal{E}_1\\
\frac{\partial H}{\partial x}(\nu)_{j}>\frac{\partial H}{\partial x}(\nu)_{i}, &
(v_i,v_j)\in\mathcal{E}_2\\
\frac{\partial H}{\partial x}(\nu)_{j}=\frac{\partial H}{\partial x}(\nu)_{i}, &
\textrm{else}
\end{cases}
\end{equation}
for suitable $\bar{x}_c$ and
$\tilde{x}_c(0)$, it follows that (\ref{constructed}) holds. Indeed, in the set
$\mathcal{E}_1\cup\mathcal{E}_2$, the equation (\ref{constructed}) takes the form
\begin{equation}\label{e1+e2}
 \begin{aligned}
1+\bar{x}_{cq} & = \lambda T_q, q-\textrm{th edge belongs to
}\mathcal{E}_1\\
  \bar{x}_{cp} & = \lambda T_p, p-\textrm{th edge belongs to }\mathcal{E}_2
 \end{aligned}
\end{equation}
Now take $\lambda$ be such that $\frac{1}{l_{\max}}<\lambda<1$. Then (\ref{e1+e2}) contains $|\mathcal{E}_1|+|\mathcal{E}_2|$ equations and the same number of variables, and has a unique
solution such that
\begin{equation}
 \begin{aligned}
  0 &<\bar{x}_{cp}<1\\
0 &<\bar{x}_{cq}<1
 \end{aligned}
\end{equation}
Furthermore, pick $\tilde{x}_c(0)$ in the
third equation of (\ref{bounds}) as
\begin{equation}
 \tilde{x}_c(0)_r = -\lambda T_r, r-\textrm{th edge belongs to }\mathcal{E}\backslash
(\mathcal{E}_1\cup\mathcal{E}_2).
\end{equation}
Obviously, there exists $0<\bar{x}_{cr}<1$ such that
\begin{equation}
 \bar{x}_{cr}<-\tilde{x}_c(0)_r<1+\bar{x}_{cr}
\end{equation}
In conclusion, there exists an equilibrium $(\nu,\tilde{x}_c)$ that does not
satisfy
\\$B^T\frac{\partial H}{\partial x}(\nu) =0$, and thus $\frac{\partial
H}{\partial
x}(\nu)$ can not reach consensus.
\end{proof}

The above constructive proof is illustrated by the following example.

\begin{exmp}\label{example}
 Consider a directed graph in Figure
$\ref{figure}$ with dynamics given by system $(\ref{closedloop-sat-disturb})$
where $H(x)=\frac{1}{2}\|x\|^2$ and $[u^-,u^+]=[0_7,\mathds{1}_7]$
\begin{equation}\label{ex}
\begin{aligned}
\dot{x} & =  B\sat(-B^Tx-x_c\,;0_7,\mathds{1}_7)+E\bar{d},
\\[2mm]
\dot{x}_c & =  B^Tx.
\end{aligned}
\end{equation}
The purpose of this example is to show
there exist in/outflows $d$ satisfying the matching condition for which $x$ does
not converge to consensus.
\begin{figure}[ht]
\begin{center}
\begin{tikzpicture}
\tikzstyle{EdgeStyle}    = [thin,double= black,
                            double distance = 0.5pt]
\useasboundingbox (0,0) rectangle (4cm,4cm);
\tikzstyle{VertexStyle} = [shading         = ball,
                           ball color      = white!100!white,
                           minimum size = 20pt,%
                           inner sep       = 1pt,]
\Vertex[style={minimum
size=0.2cm,shape=circle},LabelOut=false,L=\hbox{$1$},x=1.5cm,y=3cm]{v1}
\Vertex[style={minimum
size=0.2cm,shape=circle},LabelOut=false,L=\hbox{$2$},x=0cm,y=1.5cm]{v2}
\Vertex[style={minimum
size=0.2cm,shape=circle},LabelOut=false,L=\hbox{$3$},x=1.5cm,y=0cm]{v3}
\Vertex[style={minimum
size=0.2cm,shape=circle},LabelOut=false,L=\hbox{$4$},x=3cm,y=1.5cm]{v4}
\Vertex[style={minimum
size=0.2cm,shape=circle},LabelOut=false,L=\hbox{$5$},x=0cm,y=3cm]{v5}
\draw
(v1) edge[->,>=angle 90,thin,double= black,double distance = 0.5pt]
node[right]{$e_1$} (v2)
(v2) edge[->,>=angle 90,thin,double= black,double distance = 0.5pt]
node[left]{$e_2$} (v3)
(v3) edge[->,>=angle 90,thin,double= black,double distance = 0.5pt]
node[right]{$e_3$} (v1)
(v1) edge[->,>=angle 90,thin,double= black,double distance = 0.5pt]
node[right]{$e_4$} (v4)
(v4) edge[->,>=angle 90,thin,double= black,double distance = 0.5pt]
node[right]{$e_5$} (v3)
(v1) edge[->,>=angle 90,thin,double= black,double distance = 0.5pt]
node[above]{$e_6$} (v5)
(v5) edge[->,>=angle 90,thin,double= black,double distance = 0.5pt]
node[left]{$e_7$} (v2);
%\draw[draw=red]
%(v3) edge[->,>=angle 90,thin,double distance = 0.5pt]
%node[right]{$e_3$} (v1);
\end{tikzpicture}
\caption{Network of Example $\ref{example}$}\label{figure}
\end{center}
\end{figure}
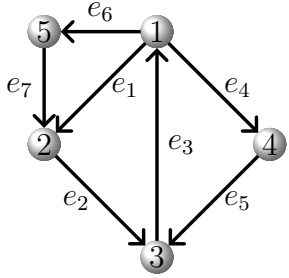
By taking $E\bar{d}=B\bar{x}_c$ where $\bar{x}_c=\frac{1}{2}\mathds{1}_7,
x(0)=(3,7 5,1,4)^T$ and
$\tilde{x}_c(0)=(1,-1,-1,1,1,1,1)^T$, the state $x$ in system $(\ref{ex})$ will converge
to $\nu$ with $\nu_2=\nu_3>\nu_5>\nu_4>\nu_1$ and $\nu_4<\nu_1$ as can
be seen from the numerical simulation in Figure $\ref{simulation}$.
\begin{figure}[htbp]
 \centering
 \includegraphics[height=5.8cm,width=7.8cm]{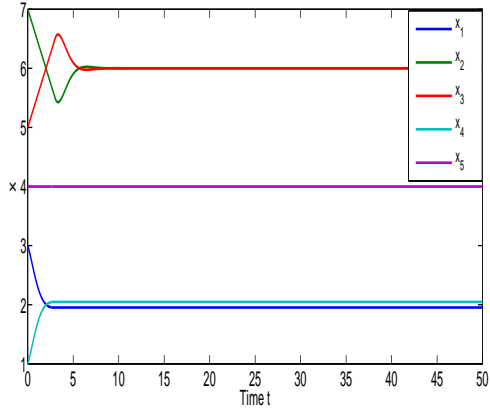}
 \caption{The time-evolutions $x_1(t),x_2(t),x_3(t),x_4(t),x_5(t)$ of the
system (\ref{ex}).}
 \label{simulation}
 \end{figure}

The same result can be deduced from the following analysis. In Figure
$\ref{figure}$, the smallest number of cycles to cover the whole graph is $3$;
one option being $(e_6,e_7,e_2,e_3)$, $(e_1,e_2,e_3)$, $(e_3,e_4,e_5)$. So
$BT=0$
where
\begin{equation}
T=(1,2,3,1,1,1,1)^T
\end{equation}
In this case $\mathcal{E}_1=\{e_3\},\mathcal{E}_2=\{e_1,e_4,e_5,e_6,e_7\}$. By
setting
$\nu_2=\nu_3>\nu_5>\nu_4>\nu_1$, the flow in
$e_3$
reaches the upper bound, while the flows in $e_1,e_4,e_5,e_6,e_7$ reach the
lower bounds, i.e.
\begin{equation}
\sat(-B^T\nu-B^T\nu
t-\tilde{x}_c(0)\,;\bar{x}_c,\bar{x}_c+\mathds{1})=\frac{1}{2}T,\,\forall t>0.
\end{equation}
Thus there exists an equilibrium $\nu$ satisfying $B^T\nu\neq0$.
\end{exmp}

\section{Conclusions}
We have discussed a basic model of dynamical distribution networks where the
flows through the edges are generated by distributed PI controllers. The
resulting system can be naturally modeled as a port-Hamiltonian system, enabling
the easy derivation of sufficient and necessary conditions for the convergence
of the state variables to load balancing (consensus). The main part of this
paper focusses on the case where flow constraints are present. A key ingredient
in this analysis is the construction of a $C^1$ Lyapunov function. We
distinguish between the case that the flow constraints corresponding to all the
edges allow for bi-directional flow and the case that all the edges only allow
for uni-directional flow. For both cases we have derived necessary and
sufficient conditions for asymptotic load balancing based on the structure of
the graph.

An obvious open problem is the extension of our results to the general case where some of the edges allow bi-directional flow and others only uni-directional flow. This is currently under investigation.
Many other questions can be addressed in this framework. For example, what is happening if
the in/outflows are not assumed to be constant, but are e.g. periodic
functions of time; see already \cite{depersis}.
Furthermore, the use of constrained PI-controllers may
suggest a fruitful connection to anti-windup control ideas.

%% The Appendices part is started with the command \appendix;
%% appendix sections are then done as normal sections
%% \appendix

%% \section{}
%% \label{}

%% References
%%
%% Following citation commands can be used in the body text:
%% Usage of \cite is as follows:
%%   \cite{key}         ==>>  [#]
%%   \cite[chap. 2]{key} ==>> [#, chap. 2]
%%

%% References with bibTeX database:

%\bibliographystyle{model1-num-names}
%\bibliography{ifacconf}

\end{document}